%
%
%

\documentclass[12pt,a4paper]{amsart}

\usepackage{amsrefs}
\usepackage[all]{xy}

\newtheorem{Thm}{Theorem}[section]
\newtheorem{Cor}[Thm]{Corollary}
\newtheorem{Lem}[Thm]{Lemma}
\newtheorem{Prop}[Thm]{Proposition}

\newtheorem{step}{Step}[section]
\renewcommand{\thestep}{\Roman{step}}

\newcommand{\vep}{\varepsilon}

\newcommand{\cM}{\mathcal{M}}
\newcommand{\cO}{\mathcal{O}}
\newcommand{\cP}{\mathcal{P}}
\newcommand{\cS}{\mathcal{S}}
\newcommand{\cU}{\mathcal{U}}
\newcommand{\cX}{\mathcal{X}}
\newcommand{\cY}{\mathcal{Y}}

\newcommand{\bp}{\mathbf{p}}
\newcommand{\bq}{\mathbf{q}}
\newcommand{\br}{\mathbf{r}}
\newcommand{\bs}{\mathbf{s}}

\newcommand{\bbZ}{\mathbb{Z}}

\let\mod=\undefined
\DeclareMathOperator{\GL}{GL}
\DeclareMathOperator{\GP}{GP}
\DeclareMathOperator{\Id}{Id}
\DeclareMathOperator{\Aut}{Aut}
\DeclareMathOperator{\End}{End}
\DeclareMathOperator{\ext}{ext}
\DeclareMathOperator{\Ext}{Ext}
\DeclareMathOperator{\Hom}{Hom}
\DeclareMathOperator{\mod}{mod}
\DeclareMathOperator{\idim}{inj.dim}
\DeclareMathOperator{\pdim}{proj.dim}

\newcommand{\ov}{\overline}

\begin{document}

\title[Algebras with irreducible module varieties III]{Algebras with irreducible module varieties III: Birkhoff varieties}

\author{Grzegorz Bobi\'nski}
\address{Grzegorz Bobi\'nski\newline
Faculty of Mathematics and Computer Science\newline
Nicolaus Copernicus University\newline
ul. Chopina 12/18\newline
87-100 Toru\'n\newline
Poland}
\email{gregbob@mat.umk.pl}

\begin{abstract}
We study a family of affine varieties arising from a version of an old problem due to Birkhoff asking for the classification of embeddings of finite abelian $p$-groups. We show that all of these varieties are irreducible and have a dense orbit.
\end{abstract}

\maketitle


\section{Introduction and main result}


\subsection{Introduction}
Let $\Lambda$ be a ring. The \emph{submodule category} $\cS (\Lambda)$ of $\Lambda$-modules has as objects pairs $(U, M)$, where $M$ is a finitely generated $\Lambda$-module and $U \subseteq M$ is a submodule of $U$. A morphism $f \colon (U, M) \to (V, N)$ in $\cS (\Lambda)$ is given by a $\Lambda$-module homomorphism $f \colon M \to N$ such that $f (U) \subseteq V$. Even in cases when the category $\mod \Lambda$ of finitely generated $\Lambda$-modules is well understood, it can be surprisingly difficult to describe $\cS (\Lambda)$.

For $\Lambda = \bbZ / (p^n)$ the problem of classifying all objects in $\cS (\Lambda)$ has been mentioned already by Birkhoff~\cite{Birkhoff}. In this case, $\cS(\Lambda)$ is a Krull--Remak--Schmidt category, i.e.\ all objects are direct sums of indecomposable objects, and these are uniquely determined up to isomorphism. There are $n$ indecomposable $\Lambda$-modules in $\mod (\Lambda)$, namely $\bbZ / (p^i)$ with $1 \le i \le n$. All of these are uniserial. Richman and Walker~\cite{Richman} have proved that the categories $\cS (\Lambda)$ contain only finitely many indecomposable objects for $n \le 5$ and infinitely many, otherwise.

Moving from uniserial rings to finite-dimensional algebras, Ringel and Schmidmeier~\cite{Ringel} have thoroughly studied $\cS (\Lambda)$ for $\Lambda = K[X]/(X^n)$. Again, $\cS (\Lambda)$ is well understood for $n \le 5$, and they have also described the first difficult case $n = 6$. For $n > 6$, the category $\cS(\Lambda)$ is of \emph{wild representation type}. This means that a classification of objects in $\cS(\Lambda)$ is impossible in a mathematically precise sense. We refer also to~\cite{KLM} for a link between the problem and singularity theory.

In this article we also concentrate on the case $\Lambda = K[X]/(X^n)$. But instead of studying $\cS(\Lambda)$, we study a closely related category $\mod A$ of finite-dimensional modules over a finite-dimensional $K$-algebra $A$. The algebra $A$ turns out to be a $1$-Iwanaga--Gorenstein algebra such that $\cS (\Lambda)$ is equivalent to the subcategory $\GP (A) \subseteq \mod A $ of Gorenstein-projective $A$-modules.

We will not attempt to classify all $A$-modules, but as our main result we show that for each dimension vector, there is \emph{generically} only one $A$-module. In particular, the algebra $A$ has the dense orbit property in the sense of Chindris, Kinser and Weyman~\cite{CKW}.

\subsection{Main result}
Throughout, we work over an algebraically closed field $K$. For $m \geq 1$ and $d_0, d_1 \geq 0$, let $\cX_m (d_0, d_1)$ be the affine variety consisting of the triples $M = (M_0, M_1, h_M)$ such that $M_0$ is a $(d_0 \times d_0)$-matrix, $M_1$ is a $(d_1 \times d_1)$-matrix, and $h_M$ is a $(d_0 \times d_1)$-matrix, such that $M_0^m = 0$, $M_1^m = 0$, and $M_0 h_M = h_M M_1$. By convention, if $d', d'' \geq 0$ and either $d' = 0$ or $d'' = 0$, then there is a unique $(d' \times d'')$-matrix, which behaves like zero with respect to multiplication. The following is the main result of the paper.

\begin{Thm}\label{thm:intro4}
The variety $\cX_m (d_0, d_1)$ is irreducible for all $m \geq 0$ and $d_0, d_1 \geq 0$.
\end{Thm}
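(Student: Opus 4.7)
The plan is to realize $\cX_m(d_0, d_1)$ as the variety of representations of an explicit bound quiver algebra and to exhibit an open dense irreducible subset, which forces irreducibility. A point $(M_0, M_1, h_M) \in \cX_m(d_0, d_1)$ is exactly a representation of dimension vector $(d_0, d_1)$ of $A := K[X]/(X^m) \otimes_K K A_2$, the path algebra of the quiver with vertices $0, 1$, arrow $h \colon 1 \to 0$, loops $x_0, x_1$, modulo $x_0^m$, $x_1^m$ and $x_0 h - h x_1$; the group $\GL_{d_0}(K) \times \GL_{d_1}(K)$ acts by base change, and an open dense orbit would give irreducibility.

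I would first consider the projection $\pi \colon \cX_m(d_0, d_1) \to \mathcal{N}_m(d_0) \times \mathcal{N}_m(d_1)$, $(M_0, M_1, h_M) \mapsto (M_0, M_1)$, where $\mathcal{N}_m(d) = \{N \in M_d(K) : N^m = 0\}$. Each factor is irreducible: it is the closure of the $\GL_d(K)$-orbit corresponding to the partition $(m, \ldots, m, r)$ with $d = qm + r$, $0 \le r < m$, the unique maximum in the dominance order on partitions of $d$ with parts $\le m$. Let $U \subseteq \mathcal{N}_m(d_0) \times \mathcal{N}_m(d_1)$ be the open dense locus where both factors have this maximal Jordan type. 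Over $U$ the fibers $\Hom_{K[X]/(X^m)}(V_1, V_0)$ have constant dimension, and a standard equivariance argument identifies $\pi^{-1}(U) \to U$ as a vector bundle, so $\pi^{-1}(U)$ is irreducible.

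The core of the proof is then to show $\pi^{-1}(U)$ is dense in $\cX_m(d_0, d_1)$: every point $(M_0, M_1, h_M) \in \cX_m(d_0, d_1)$ must admit a one-parameter deformation in $\cX_m(d_0, d_1)$ landing in $\pi^{-1}(U)$ for $t \ne 0$. The approach I would pursue is representation-theoretic: decompose the $A$-module $(M_0, M_1, h_M)$ as a direct sum of indecomposable $A$-modules, deform each summand individually to an indecomposable whose underlying pair of nilpotents is in $U$, and assemble these into a global one-parameter family. A supplementary construction is to exhibit, for every dimension vector, an explicit \emph{generic} $A$-module $M^{\mathrm{gen}}$ with $\Ext^1_A(M^{\mathrm{gen}}, M^{\mathrm{gen}}) = 0$, so that Voigt's lemma immediately yields that the orbit of $M^{\mathrm{gen}}$ is open; combined with an orbit-dimension count against the irreducible open set $\pi^{-1}(U)$, this would also pin down the generic decomposition promised by the abstract.

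The main obstacle is this density step. Because $\dim \Hom_{K[X]/(X^m)}$ is upper semi-continuous and jumps upward off $U$, there is no formal reason that an element of the larger, special Hom-space is a limit of elements from nearby generic ones; the deformation has to be constructed using the specific structure of $A$-modules. Since $A$ is not of finite representation type for large $m$, controlling the indecomposables and producing their degenerations, compatibly with the commutation relation $M_0(t) h_M(t) = h_M(t) M_1(t)$, is the nontrivial part of the argument.
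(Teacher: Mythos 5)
Your setup is correct and aligns with the paper's: view $\cX_m(d_0, d_1)$ as a variety of representations, project to the product of nilpotent cones, and observe that over the open locus $\cU$ where both nilpotents have maximal Jordan type the fibres $\Hom_\Lambda(M_1, M_0)$ have constant (indeed minimal) dimension, so the preimage is a vector bundle over an irreducible base and hence irreducible. The paper uses this very observation (in its Lemma~\ref{lem def strata}). You also correctly name the crux: showing that $\pi^{-1}(\cU)$, i.e.\ the stratum $\cS_{\bp_0, \bq_0}$ of maximal partitions, is dense in $\cX_m(d_0, d_1)$, and correctly note that upper semicontinuity of the Hom-dimension works \emph{against} you here, so there is no formal reason for density.

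However, neither of the two ways you sketch for closing this gap works as stated. The plan ``decompose the $A$-module into indecomposable summands, deform each summand individually to one whose nilpotents are maximal, and assemble'' fails at the logical level. Take $m=2$, $d_0 = d_1 = 1$, and the summand $N = M_{(1),(1)}$ (so $N_0 = N_1 = K$ with trivial $X$-action and $h_N = 1$). This $N$ is rigid within its own dimension vector $(1,1)$, so it admits no nontrivial deformation; yet the decomposable module $N \oplus N$ in $\cX_2(2,2)$ does lie in $\ov{\cS_{(2),(2)}}$ (specialize $M_{(2),(2)}$ by scaling both nilpotents by $t$). Any valid degeneration must mix the summands; a summand-by-summand family stays inside the same stratum $\cS_{\bp,\bq}$ and never reaches $\pi^{-1}(\cU)$. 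Your alternative plan --- exhibit $M^{\mathrm{gen}}$ with $\Ext_A^1(M^{\mathrm{gen}}, M^{\mathrm{gen}}) = 0$ and invoke Voigt's lemma --- only shows that $\ov{\cO_{M^{\mathrm{gen}}}}$ is one irreducible component; it does not rule out the existence of others, which is exactly what must be proved. The proposed ``orbit-dimension count against $\pi^{-1}(\cU)$'' cannot rescue this unless you already know every irreducible component has the same dimension, which is not available without further work.

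The paper closes the gap by a genuinely different, structural route rather than by assembling local deformations. It first shows (Proposition~\ref{prop generic}) that each stratum closure $\ov{\cS_{\bp,\bq}}$ is the closure of the single orbit $\cO_{M_{\bp,\bq}}$, where $M_{\bp,\bq}$ is given by an explicit normal form obtained by row and column operations; this identifies every potential irreducible component with a specific module. It then invokes the Crawley-Boevey--Schr\"oer decomposition of irreducible components of module varieties into $\ext$-orthogonal indecomposable pieces, proves two deformation lemmas between strata (Lemmas~\ref{lem def ind} and~\ref{lem def strata}), deduces from them the very short list of possible indecomposable irreducible components (Corollary~\ref{cor ind irr}), and finally runs an exhaustive case analysis (Lemma~\ref{lem direct sums}) showing that the $\ext = 0$ constraint among these forces the union of partitions to be maximal, so that the only irreducible component is $\ov{\cS_{\bp_0,\bq_0}}$. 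Some version of this last combinatorial step --- either via the Crawley-Boevey--Schr\"oer theory and the $\ext$-vanishing case analysis, or via a direct argument that every $\cS_{\bp,\bq}$ lies in $\ov{\cS_{\bp_0,\bq_0}}$ --- is the substance missing from your proposal.
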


The group $G_{d_0,d_1} := \GL_{d_0}(K) \times \GL_{d_1}(K)$ acts on $\cX_m(d_0,d_1)$ by
\[
(g_0, g_1) \cdot (M_0, M_1, h_M) := (g_0 M_0 g_0^{-1}, g_1 M_1 g_1^{-1},g_0 h_M g_1^{-1}).
\]
We prove Theorem~\ref{thm:intro4} by showing that $\cX_m (d_0, d_1)$ has a unique dense $G_{d_0,d_1}$-orbit.

We explain now a connection of the above theorem with the theory of geometrically irreducible algebras developed in~\cites{BobinskiSchroer2017a, BobinskiSchroer2017b}. Recall that an algebra $A$ is called \emph{geometrically irreducible} if and only if, for each dimension $d$, the connected components of the variety $\mod (A, d)$ of $d$-dimensional $A$-module structures are irreducible. We formulated in~\cite{BobinskiSchroer2017a} a conjecture, which states that up to a trivial glueing procedure every geometrically irreducible algebra has (up to isomorphism) at most two simple modules. This conjecture has been verified for a wide class of algebras in~\cite{BobinskiSchroer2017b}*{Theorem~1.2}. Geometrically irreducible algebras with exactly one simple module are precisely the local algebras~\cite{BobinskiSchroer2017a}*{Proposition~1.5}. Now let $1 \leq n \leq m$ and put
\[
A (m, n) :=
\begin{bmatrix}
\Lambda & \Gamma \\ 0 & \Lambda
\end{bmatrix},
\]
where $\Lambda := K [X] / (X^m)$ and $\Gamma := (\Lambda \otimes_K \Lambda) / (\sum_{i = 0}^n X^i \otimes X^{n - i})$ (which we treat as a $\Lambda$-bimodule). It has been proved in~\cite{BobinskiSchroer2017b}*{Theorem~1.1} that if $A$ is a geometrically irreducible algebra with exactly two simple modules, then up to Morita equivalence and the glueing procedure mentioned above either $A \cong A (m, 1)$, for some $m \geq 1$, or $A \cong A (m, m - 1)$, for some $m \geq 2$. It was left open in~\cite{BobinskiSchroer2017b} if the algebras $A (m, 1)$ and $A (m, m - 1)$ are actually geometrically irreducible, for $m \geq 2$. The following reformulation of Theorem~\ref{thm:intro4} answers this question in the case of the former family.

\begin{Cor} \label{cor main}
The algebra $A (m, 1)$ is geometrically irreducible for all $m \geq 1$.
\end{Cor}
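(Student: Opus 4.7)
The plan is to derive the corollary directly from Theorem~\ref{thm:intro4} by matching module varieties. First, I would unpack the structure of the bimodule $\Gamma$: iterating the defining relation $1 \otimes X + X \otimes 1 = 0$ gives $X^i \otimes X^j = (-1)^j X^{i+j} \otimes 1$ in $\Gamma$, so $\Gamma$ is a free left $\Lambda$-module of rank one whose right $\Lambda$-action is the standard one twisted by the automorphism $\sigma \colon X \mapsto -X$. Consequently, an $A(m,1)$-module of dimension vector $(d_0, d_1)$ is the same datum as a triple $(M_0, M_1, h_M)$ where $M_0$, $M_1$ are nilpotent matrices of sizes $d_0$, $d_1$ with $M_0^m = M_1^m = 0$ and $h_M$ is a $(d_0 \times d_1)$-matrix subject to $M_0 h_M + h_M M_1 = 0$.

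Second, I would note that the substitution $(M_0, M_1, h_M) \mapsto (M_0, -M_1, h_M)$ defines a $G_{d_0, d_1}$-equivariant isomorphism between $\cX_m(d_0, d_1)$ and the dimension-vector module variety just described: the nilpotency condition is preserved, the equation $M_0 h_M = h_M M_1$ becomes $M_0 h_M = -h_M(-M_1)$ as required, and equivariance is immediate since no signs enter the group action. Theorem~\ref{thm:intro4} therefore yields that this dimension-vector module variety is irreducible.

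Third, I would link this picture to the total-dimension variety $\mod(A(m,1), d)$ appearing in the definition of geometric irreducibility. Any $A(m,1)$-module structure on $K^d$ determines, via the images of the two primitive orthogonal idempotents of $A(m,1)$, a decomposition $d = d_0 + d_1$; this dimension vector is locally constant on $\mod(A(m,1), d)$, so the decomposition of $\mod(A(m,1), d)$ according to dimension vectors is at least coarser than the decomposition into connected components. Conversely, the stratum for fixed $(d_0, d_1)$ is the $\GL_d$-saturation of the closed subvariety where the idempotents act as standard projections, i.e.\ the associated bundle $\GL_d \times^{G_{d_0, d_1}} \cX_m(d_0, d_1)$ over the irreducible homogeneous space $\GL_d / G_{d_0, d_1}$. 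Since both base and fibre are irreducible, each stratum is irreducible, in particular connected, hence is already a connected component, and the corollary follows.

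The argument is essentially bookkeeping; the only mild subtlety is the sign twist inside $\Gamma$, which is absorbed by the substitution $M_1 \mapsto -M_1$. All the genuine mathematical content of the corollary is already carried by Theorem~\ref{thm:intro4}.
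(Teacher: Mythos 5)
Your argument is correct and reaches the same conclusion, but by a more self-contained route than the paper's. The paper disposes of the corollary in one sentence by citing Bongartz's geometric version of Morita equivalence, which asserts precisely that $A(m,1)$ is geometrically irreducible if and only if every $\cX_m(d_0, d_1)$ is irreducible. You instead re-derive the two facts that citation packages: (i) the sign twist hidden in the bimodule $\Gamma$ --- the paper absorbs this silently by noting $A(m,1) \cong \left[\begin{smallmatrix}\Lambda & \Lambda \\ 0 & \Lambda\end{smallmatrix}\right]$, whereas you make it explicit via the substitution $M_1 \mapsto -M_1$, which is indeed a $G_{d_0,d_1}$-equivariant isomorphism onto $\cX_m(d_0,d_1)$; and (ii) the passage from the dimension-vector varieties $\cX_m(d_0,d_1)$ to the total-dimension varieties $\mod(A, d)$, via local constancy of the idempotent ranks and the associated-bundle description of each dimension-vector stratum. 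For (ii), note that you only need the action map $\GL_d \times \cX_m(d_0,d_1) \to \mod(A, d)$ to surject onto the stratum; irreducibility of the image then follows from irreducibility of the source, without needing the stronger assertion that the stratum is literally isomorphic to the associated bundle as a variety. Both approaches are valid: the paper's is shorter at the price of a citation, yours is longer but elementary and closer to first principles.
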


It remains open if $A (m, m - 1)$ is geometrically irreducible for $m \geq 3$.

\subsection{Notation}
Throughout the paper $m \geq 1$ is a fixed integer and $\Lambda := K [X] / (X^m)$. Moreover $A := A (m, 1)$. Note that $A$ is isomorphic to the matrix algebra $\left[
\begin{smallmatrix}
\Lambda & \Lambda \\ 0 & \Lambda
\end{smallmatrix}
\right]$.

\subsection{Acknowledgments}
The author acknowledges the support of the National Science Center grant no.\ 2015/17/B/ST1/01731. He would also like to thank the University of Bonn for its hospitality during his visits in July 2016 and July 2017. In particular, he thanks Jan Schro\"er for many discussions. The paper is a continuation of a joint project on geometrically irreducible algebras and Jan Schro\"er has influenced the paper a lot.


\section{Preliminary observations}


\subsection{Interpretation of the variety}
If $d \geq 0$ and $U$ is a $(d \times d)$-matrix such that $U^m = 0$, then $U$ induces a $\Lambda$-module structure on $K^d$. By abuse of
notation we denote this $\Lambda$-module also by $U$. Consequently, if $d_0, d_1 \geq 0$, $M_0$ is a $(d_0 \times d_0)$-matrix, $M_1$ is a $(d_1 \times d_1)$-matrix, and $h_M$ is $(d_0 \times d_1)$-matrix, then $M = (M_0, M_1, h_M) \in \cX_m (d_0, d_1)$ if and only if $M_0^m = 0$, $M_1^m = 0$, and $h_M \in \Hom_\Lambda (M_1, M_0)$, where we identify $h_M$ with the induced map $K^{d_1} \to K^{d_0}$.

Taking the above into account we denote by $\cM$ the category consisting of the triples $M = (M_0, M_1, h_M)$ such that $M_0$ and $M_1$ are finite-dimensional $\Lambda$-modules and $h_M \in \Hom_\Lambda (M_1, M_0)$. If $M$ and $N$ are objects of $\cM$, then $\Hom_{\cM} (M, N)$ consists of the pairs $f = (f (0), f (1))$ of $\Lambda$-module homomorphisms $f (0) \colon M_0 \to N_0$ and $f (1) \colon M_1 \to N_1$, such that $f (0) h_M = h_N f (1)$. One easily sees that $\cM$ is an abelian category. In fact, $\cM$ is equivalent to the category $\mod(A)$ of finite-dimensional left $A$-modules. Throughout the rest of the paper we treat this equivalence as an identification. In particular, if $M$ and $N$ are objects of $\cM$, then we write $\Hom_A (M, N)$ instead of $\Hom_{\cM} (M, N)$, etc.

Note that according to~\cite{BobinskiSchroer2017a}*{Proposition~4.2} Corollary~\ref{cor main} implies that $A$ is an Iwanaga--Gorenstein algebra, i.e.\ $\pdim_A D (A) < \infty$ and $\idim_A A < \infty$. In fact, \cite{GeissLeclercSchroer}*{Theorem~1.2} states that $A$ is even $1$-Iwanaga--Gorenstein, i.e.\ $\pdim_A D (A) = 1 = \idim_A A$. Moreover, \cite{GeissLeclercSchroer}*{Theorem~10.9} (see also~\cite{LuoZhang}*{Corollary 6.1}) says that a triple $M$ corresponds to a Gorenstein-projective $A$-module if and only if $h_M$ is injective, thus the category $\GP (A)$ of Gorenstein-projective $A$-modules is equivalent to the submodule category $\cS (\Lambda)$.

\subsection{Representation theory and geometry of the truncated polynomial algebras} \label{sub truncated} \label{sub geometry}
We need to recall some facts about the representation theory and geometry of $\Lambda$-modules.

It is well-known that, for a given $d \geq 0$, the isomorphism classes of $d$-dimensional $\Lambda$-modules are parameterized by the partitions of $d$ with parts at most $m$ (we denote the set of such partitions by $\cP_m (d)$ and from now on we assume that all partitions have parts at most $m$).
A~partition $\bp = (p_1, \ldots, p_l) \in \cP_m (d)$ corresponds to a $\Lambda$-module $U_\bp$ with the action of $X$ given by the matrix
\[
J_\bp := \begin{bmatrix}
J_{p_1} & 0 & \cdots & 0
\\
0 & J_{p_2} & \ddots & \vdots
\\
\vdots & \ddots & \ddots & 0
\\
0 & \cdots & 0 & J_{p_l}
\end{bmatrix},
\]
where $J_p$ denotes the nilpotent Jordan matrix of size $p$ (with $1$'s below the diagonal). Obviously, in the situation above we have
\[
U_\bp \cong U_{(p_1)} \oplus \cdots \oplus U_{(p_l)}.
\]
A partition $\bp \in \cP_m (d)$ is called \emph{maximal} if either $\bp$ is the empty partition (if $d = 0$) or $\bp = (m, \ldots, m, r)$, for some $1 \leq r \leq m$ (if $d > 0$).

Now we describe homomorphisms between $\Lambda$-modules. It is sufficient to describe homomorphisms between the indecomposable $\Lambda$-modules, i.e.\ the modules of the form $U_{(p)}$, $1 \leq p \leq m$.
First observe, that if $1 \leq p \leq m$, then $U_{(p)} \cong K [X] / (X^p)$, where the isomorphism sends the $i$-th standard basis vector to $X^{i - 1}$. We will treat this isomorphism as an identification. With this identification in mind, if $1 \leq p, q \leq m$, then every homomorphism $U_{(q)} \to U_{(p)}$ is given by the multiplication by a polynomial $\Phi \in K [X]$, such that $X^{p - q} \mid \Phi$ (this condition is empty if $q \geq p$). Moreover, $\Phi$ induces the zero map if and only if $X^p \mid \Phi$. In particular $\dim_K \Hom_\Lambda (U_{(q)}, U_{(p)}) = \min \{ p, q \}$.

In the rest of the paper we represent the homomorphisms between indecomposable $\Lambda$-modules by polynomials. It is worth noting that the polynomial $1$ represents a morphism $U_{(q)} \to U_{(p)}$ if and only if $q \geq p$, however it is the identity morphism if and only if $q = p$ (if $q > p$, then the morphism represented by $1$ is epi, but not mono). Observe that $\Psi$ represents an epimorphism $U_{(q)} \to U_{(p)}$ if and only if $\Psi$ has a non-zero constant term (in particular, $q \geq p$). Dually, $\Psi$ represents a monomorphism $U_{(q)} \to U_{(p)}$ if and only if $\frac{\Psi}{X^{p - q}}$ is a polynomial with a non-zero constant term (in particular, $q \leq p$).

If homomorphisms $\varphi \colon U_{(q)} \to U_{(p)}$ and $\psi \colon U_{(r)} \to U_{(q)}$ are represented by polynomials $\Phi$ and $\Psi$, respectively, then $\varphi \circ \psi$ is represented by the polynomial $\Phi \cdot \Psi$ (the usual multiplication of polynomials). We will sometimes write $\Phi \circ \Psi$ instead of $\Phi \cdot \Psi$ in order to stress this fact.

We note the following factorization property for homomorphisms of $\Lambda$-modules (which can be proved using the above).
Let $r \leq q \leq p$. If either $\varphi \colon U_{(q)} \to U_{(p)}$ or $\varphi \colon U_{(r)} \to U_{(q)}$, and $\varphi$ is mono, then every homomorphism $U_{(r)} \to U_{(p)}$ factors through $\varphi$. We have the dual statement for epimorphisms.

It is well known that, for each $p$, $U_{(p)} \cong \Hom_K (U_{(p)}, K)$ (as $\Lambda$-modules), and an isomorphism can be given by a map which sends $1 \in U_{(p)}$ to the map $X^i \mapsto \delta_{i, p - 1}$, $i = 0, \ldots, p - 1$, where $\delta_{x, y}$ is the Kronecker delta. Then, if $\Phi \in K [X]$ represents a homomorphism $\varphi \colon U_{(q)} \to U_{(p)}$, then $X^{q - p} \cdot \Phi$ represents $\Hom_K (\varphi, K) \colon U_{(p)} \to U_{(q)}$.

For $d \geq 0$ let $\cY_m (d)$ be the set of $(d \times d)$-matrices $U$ such that $U^m = 0$. Recall that each $U \in \cY_m (d)$ can viewed as a $\Lambda$-module of dimension $d$. For a partition $\bp \in \cP_m (d)$ we denote by $\cO_\bp$ the set of $U \in \cY_m (d)$ such that $U \cong U_\bp$. This is an irreducible locally closed subset of $\cY_m (d)$ (note that $\cO_\bp$ is the orbit of $J_\bp$ with respect to the conjugation action of $\GL_d (K)$). It is classical (see for example~\cite{Gerstenhaber}) that $\cO_\bp \subseteq \ov{\cO_\bq}$ if and only if $\bp \leq \bq$ in the dominance order, i.e.\ $p_1 + \cdots + p_i \leq q_1 + \ldots + q_i$, for each $i$.

\subsection{Weakly indecomposable partitions} \label{sub weakly}
We call a pair $(\bp, \bq)$ of partitions (with parts at most $m$) \emph{weakly indecomposable} if either
\[
p_1 \geq q_1 \geq p_2 \geq q_2 \geq \cdots \qquad \text{or} \qquad q_1 \geq p_1 \geq q_2 \geq p_2 \geq \cdots.
\]
Here and later we always use notation
\[
\bp = (p_1, p_2, \ldots) \qquad \text{and} \qquad \bq = (q_1, q_2, \ldots),
\]
where we extend partitions by zeros. In the former case we say that $(\bp, \bq)$ is of \emph{mono type}, while in the latter case we say that $(\bp, \bq)$ is of \emph{epi type}. Obiously, $(\bp, \bq)$ can be both of mono and epi type at the same time (this happens if $p_i = q_i$, for all $i$). We want to define an $A$-module $M_{(\bp, \bq)} := (U_\bp, U_\bq, h_{(\bp, \bq)})$ for each weakly indecomposable pair $(\bp, \bq)$ (we usually write $M_{\bp, \bq}$ and $h_{\bp, \bq}$ instead of $M_{(\bp, \bq)}$ and $h_{(\bp, \bq)}$, respectively). If $(\bp,\bq)$ is of mono type, we define the map
$h_{\bp, \bq}$ by the following matrix
\[
\begin{bmatrix}
X^{p_1 - q_1} & 0 & 0 & \cdots
\\
1 & X^{p_2 - q_2} & 0
\\
0 & 1 & X^{p_3 - q_3} & \ddots
\\
\vdots & \ddots &  \ddots & \ddots
\end{bmatrix},
\]
(with respect to the standard decompositions $U_{\bp} := \bigoplus_{i \geq 1} U_{(p_i)}$ and $U_{\bq} := \bigoplus_{i \geq 1} U_{(q_i)}$). In the epi case we define the map $h_{\bp, \bq}$ dually, i.e.\
\[
h_{\bp, \bq} :=
\begin{bmatrix}
1 & X^{p_1 - q_2} & 0 & \cdots
\\
0 & 1 & X^{p_2 - q_3} & \ddots
\\
0 & 0 & 1 & \ddots
\\
\vdots & & \ddots & \ddots
\end{bmatrix}
\]
Note that if the above cases overlap, namely, if $p_i = q_i$ for each $i$, then one may use both definitions. The definitions differ, however they give isomorphic modules, namely, the direct sum of $M_{(p_i), (q_i)}$, $i \geq 1$ (see Lemma~\ref{lemma iso}), hence by abuse of notation we denote them by the same symbol. It should always be clear from the context, which particular version of the definition we have in mind.

If $\bp$ and $\bq$ are partitions, then by $\bp \cup \bq$ we denote the partition with the parts
\[
p_1, q_1, p_2, q_2, \ldots
\]
(not necessarily in this order). Similarly, if $(\bp, \bq)$ and $(\br, \bs)$ are pairs of partitions, then $(\bp, \bq) \cup (\br, \bs) := (\bp \cup \br, \bq \cup \bs)$. Note that if $(\bp, \bq)$ is a weakly indecomposable pair of partitions and $0 < n \leq m$ is an integer, then the pair $(\bp, \bq) \cup ((n), (n))$ is weakly indecomposable of the same type as $(\bp, \bq)$. We have the following.

\begin{Lem} \label{lemma iso}
If $(\bp, \bq)$ is a weakly indecomposable pair of partitions and $0 < n \leq m$ is an integer, then
\[
M_{(\bp, \bq) \cup ((n), (n))} \cong M_{\bp, \bq} \oplus M_{(n), (n)}.
\]
\end{Lem}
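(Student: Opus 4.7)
The plan is to write $M_{(\bp, \bq) \cup ((n), (n))}$ as an internal direct sum of two subobjects in $\mod A$, one isomorphic to $M_{(n), (n)}$ and the other to $M_{\bp, \bq}$. I treat the mono case only: the epi case follows by applying the duality $D = \Hom_K (-, K)$, which reverses the direction of the structure map and, by the polynomial calculus of Subsection~\ref{sub truncated}, interchanges the mono and epi types.

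Assume that $(\bp, \bq)$ is of mono type and let $k$ be the index with $q_k \geq n \geq p_{k+1}$ (with the conventions $q_0 = \infty$ and $p_{r+1} = 0$); the new part $n$ then occupies position $k+1$ in both $\bp' := \bp \cup (n)$ and $\bq' := \bq \cup (n)$. Let $v \in U_{\bq'}$ and $v' \in U_{\bp'}$ denote the generators of the respective $(k+1)$-th summands $U_{(n)}$, and let $w \in U_{\bp'}$ be the generator of the $(k+2)$-th summand $U_{(p_{k+1})}$. Reading off the matrix defining $h_{\bp', \bq'}$ yields $h_{\bp', \bq'} (v) = v' + w$, and since $p_{k+1} \leq n$ the element $v' + w$ has annihilator $(X^n)$ in $\Lambda$. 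Hence $N_0 := \Lambda (v' + w)$ and $N_1 := \Lambda v$ define a subobject $N' := (N_0, N_1, h_{\bp', \bq'}|_{N_1})$ of $M_{\bp', \bq'}$ isomorphic to $M_{(n), (n)}$.

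For the complement, on the first side take the naive complement $N_0^c := \bigoplus_{i \neq k+1} U_{(p'_i)}$, which satisfies $N_0 \oplus N_0^c = U_{\bp'}$ because $p_{k+1} \leq n$. The analogous naive complement of $N_1$ on the second side is not $h$-stable, because $h_{\bp', \bq'}$ sends the $k$-th summand $U_{(q_k)}$ partly into the new $U_{(n)} \subseteq U_{\bp'}$. The remedy is to replace $U_{(q_k)}$ by the graph
\[
\tilde U_{(q_k)} := \{ a + \sigma (a) : a \in U_{(q_k)} \} \subseteq U_{(q_k)} \oplus U_{(n)} \subseteq U_{\bq'},
\]
where $\sigma \colon U_{(q_k)} \to U_{(n)}$ is the $\Lambda$-homomorphism represented by the polynomial $-1$. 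The sign $-1$ is precisely what is needed to make the $U_{(n)}$-coordinate of $h_{\bp', \bq'} (a + \sigma (a))$ vanish. Setting $N_1^c := \bigl( \bigoplus_{j < k} U_{(q_j)} \bigr) \oplus \tilde U_{(q_k)} \oplus \bigl( \bigoplus_{j > k+1} U_{(q'_j)} \bigr)$ yields $N_1 \oplus N_1^c = U_{\bq'}$ and $h_{\bp', \bq'} (N_1^c) \subseteq N_0^c$, so that $N'' := (N_0^c, N_1^c, h_{\bp', \bq'}|_{N_1^c})$ is a subobject of $M_{\bp', \bq'}$ and $M_{\bp', \bq'} = N' \oplus N''$.

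Finally, I identify $N'' \cong M_{\bp, \bq}$. Using the projection $\tilde U_{(q_k)} \to U_{(q_k)}$ and rearranging summands gives $N_0^c \cong U_\bp$ and $N_1^c \cong U_\bq$; a direct calculation shows that, in these identifications, the matrix of $h_{\bp', \bq'}|_{N_1^c}$ coincides with that of $h_{\bp, \bq}$ except that the subdiagonal entry in position $(k+1, k)$ equals $-1$ instead of $1$. The remaining sign is absorbed by the automorphism that multiplies the $i$-th summand of each of $U_\bp$ and $U_\bq$ by $-1$ for every $i > k$; this propagates cleanly along the diagonal and subdiagonal and produces the required isomorphism. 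The only non-routine point is identifying the correct shear coefficient $\sigma = -1$; once this is done, the rest reduces to direct matrix manipulations.
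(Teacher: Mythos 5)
Your strategy --- realizing $M_{(\bp,\bq) \cup ((n),(n))}$ as an internal direct sum of an $A$-submodule isomorphic to $M_{(n),(n)}$ and one isomorphic to $M_{\bp,\bq}$ --- is a clean repackaging of what the paper does by row and column operations, and the calculations you carry out in the case you treat (the annihilator of $v'+w$, the shear $\sigma = -1$ defining $\tilde U_{(q_k)}$, the sign-absorbing diagonal automorphism) are all correct. Reducing the epi case to the mono case by $D = \Hom_K(-,K)$ is also fine.

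However, there is a genuine gap inside the mono case itself. You begin by asserting the existence of an index $k$ with $q_k \geq n \geq p_{k+1}$, and everything afterwards relies on the consequence that the new part $n$ is inserted at the same position $k+1$ in both $\bp' = \bp \cup (n)$ and $\bq' = \bq \cup (n)$. For a mono-type pair the intervals $[p_{k+1}, q_k]$ do not cover everything: the open gaps $(q_k, p_k)$ are missed whenever $p_k > q_k$, and if $n$ lies in such a gap no $k$ satisfies your condition. A concrete counterexample: take $\bp = (5,2)$, $\bq = (2,1)$ (mono type, since $5 \geq 2 \geq 2 \geq 1$) and $n = 4$. Then $q_0 = \infty \geq 4$ but $4 < p_1 = 5$, and $q_1 = 2 < 4$, so there is no valid $k$. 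Here $\bp' = (5,4,2)$ has $n$ at position $2$, while $\bq' = (4,2,1)$ has $n$ at position $1$. Your $v$ and $v'$ would then sit in different slots; the diagonal entry of $h_{\bp',\bq'}$ at the new slot of $\bp'$ is $X^{n - q_k}$ rather than $\Id_{U_{(n)}}$, the identity now appears on the subdiagonal between the two new summands, and the single shear $\sigma$ on the $\bq'$-side no longer produces an $h$-stable complement --- in the paper's matrix language one needs a row operation on the $\bp'$-side as well as a column operation on the $\bq'$-side. The paper's proof explicitly separates these two sub-cases (``If $i_0 = 1$ or $q_{i_0 - 1} \geq n$, then \dots otherwise \dots'') and only writes out the aligned one in detail; your argument silently assumes only the aligned case can occur. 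This omission is not repaired by your duality reduction, since the misaligned sub-case already lives inside the mono type. To complete the argument you must handle the misaligned case by a separate construction (with the roles of the two sides essentially interchanged), mirroring what the paper leaves to the reader in its ``the latter can be treated similarly.''
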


\begin{proof}
In order to simplify the presentation we assume that $(\bp, \bq)$ is of mono type. Let $i_0$ be the minimal $i$ such that $n \geq p_i$. In particular, $i_0 := \ell (\bp) + 1$, if $n < p_{\ell (\bp)}$ (here and later $\ell (\bp)$ denotes the length of the partition $\bp$, i.e.\ the maximal $i$ such that $p_i > 0$; note that $\ell (\bp) = 0$, if $\bp$ is the empty partition). Then
\[
\bp \cup ((n)) = (p_1, \ldots, p_{i_0 - 1}, n, p_{i_0}, \ldots).
\]
If $i_0 = 1$ or $q_{i_0 - 1} \geq n$, then
\[
\bq \cup ((n)) = (q_1, \ldots, q_{i_0 - 1}, n, q_{i_0}, \ldots),
\]
otherwise
\[
\bq \cup ((n)) = (q_1, \ldots, q_{i_0 - 2}, n, q_{i_0 - 1}, \ldots).
\]
We concentrate on the former case, the latter can be treated similarly.

Let
\[
U' := \bigoplus_{i < i_0} U_{(p_i)} \qquad \text{and} \qquad U'' := \bigoplus_{i > i_0} U_{(p_i)}.
\]
Similarly,
\[
V' := \bigoplus_{j < i_0 - 1} U_{(q_j)} \qquad \text{and} \qquad V'' := \bigoplus_{j \geq i_0} U_{(q_j)}.
\]
Then
\begin{gather*}
U_\bp = U' \oplus U_{(p_{i_0})} \oplus U'' \qquad \text{and} \qquad U_\bq = V' \oplus U_{(q_{i_0 - 1})} \oplus V'',
\\
\intertext{while}
U_{\bp \cup (n)} = U' \oplus U_{(n)} \oplus U_{(p_{i_0})} \oplus U''
\\
\intertext{and}
U_{\bq \cup (n)} = V' \oplus U_{(q_{i_0 - 1})} \oplus U_{(n)} \oplus V'',
\end{gather*}
where $U_{(q_{i_0 - 1})}$ is the zero module, if $i_0 = 1$.

With respect to these decompositions
\begin{gather*}
h_{\bp, \bq} =
\begin{bmatrix}
\varphi_{1, 1} & \varphi_{1, 2} & 0
\\
0 & 1 & \varphi_{2, 3}
\\
0 & 0 & \varphi_{3, 3}
\end{bmatrix}
\\
\intertext{and}
h_{(\bp, \bq) \cup ((n), (n))} =
\begin{bmatrix}
\varphi_{1, 1} & \varphi_{1, 2} & 0 & 0
\\
0 & 1 & \Id_{U_{(n)}} & 0
\\
0 & 0 & 1 & \varphi_{2, 3}
\\
0 & 0 & 0 & \varphi_{3, 3}
\end{bmatrix},
\end{gather*}
(for some homomorphisms $\varphi_{1, 1} \colon V' \to U'$, $\varphi_{1, 2} \colon U_{(q_{i_0 - 1})} \to U'$, $\varphi_{2, 3} \colon V'' \to U_{(p_{i_0})}$, and $\varphi_{3, 3} \colon V'' \to U''$). Using appropriate row and column operations we transform the matrix $h_{(\bp, \bq) \cup ((n), (n))}$ to the form
\[
\begin{bmatrix}
\varphi_{1, 1} & \varphi_{1, 2} & 0 & 0
\\
0 & 1 & \varphi_{2, 3} & 0
\\
0 & 0 & \varphi_{3, 3} & 0
\\
0 & 0 & 0 & \Id_{U_{(n)}}
\end{bmatrix} =
\begin{bmatrix}
h_{\bp, \bq} & 0
\\
0 & h_{(n), (n)}
\end{bmatrix},
\]
and the claim follows.
\end{proof}

\subsection{Canonical decomposition} \label{subsect can}
A pair $(\bp, \bq)$ of partitions is called \emph{indecomposable}, if either $(\bp) = (n) = (\bq)$ for some $0 < n \leq m$, or
\begin{align*}
\ell (\bp) & = \ell (\bq) > 0 \quad \text{and} \quad p_1 > q_1 > p_2 > q_2 > \cdots > p_{\ell (\bp)} > q_{\ell (\bq)}, \text{ or}
\\
\ell (\bq) & = \ell (\bp) > 0 \quad \text{and} \quad q_1 > p_1 > q_2 > p_2 > \cdots > q_{\ell (\bq)} > p_{\ell (\bp)}, \text{ or}
\\
\ell (\bp) & = \ell (\bq) + 1 \quad \text{and} \quad p_1 > q_1 > p_2 > q_2 > \cdots > p_{\ell (\bp) - 1} > q_{\ell (\bq)} > p_{\ell (\bp)}, \text{ or}
\\
\ell (\bq) & = \ell (\bp) + 1 \quad \text{and} \quad q_1 > p_1 > q_2 > p_2 > \cdots > q_{\ell (\bq) - 1} > p_{\ell (\bp)} > q_{\ell (\bq)}.
\end{align*}
The following lemma explains the terminology.

\begin{Lem} \label{lemma ind}
Let $(\bp, \bq)$ be an indecomposable pair of partitions. Then $M_{\bp, \bq}$ is indecomposable.
\end{Lem}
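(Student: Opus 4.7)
The plan is to prove that $\End_A(M_{\bp,\bq})$ is a local $K$-algebra, from which the indecomposability of $M_{\bp,\bq}$ follows immediately. I work in detail in the mono case with $\ell(\bp) = \ell(\bq) = l$ and strict decrease $p_1 > q_1 > p_2 > q_2 > \cdots > p_l > q_l$; the case $\bp = \bq = (n)$ yields $\End_A(M_{\bp,\bq}) \cong K[X]/(X^n)$ directly from $h = \Id$, and the remaining cases (the epi type and the length-mismatch variants) follow by the same technique with minor index adjustments.

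Using the matrix description of $h_{\bp,\bq}$ together with the parameterization of $\Lambda$-homomorphisms from Section~\ref{sub truncated}, an endomorphism $(f_0, f_1)$ is given by matrices of polynomials $f_0 = (\varphi_{i,k})$ and $f_1 = (\psi_{i,k})$ subject to the divisibility constraints $X^{\max(0, p_i - p_k)} \mid \varphi_{i,k}$ and $X^{\max(0, q_i - q_k)} \mid \psi_{i,k}$. The commutation relation $f_0 h_{\bp,\bq} = h_{\bp,\bq} f_1$ expands, for each $(i,j)$, to
\[
\varphi_{i,j} X^{p_j - q_j} + \varphi_{i, j+1} \equiv X^{p_i - q_i} \psi_{i,j} + \psi_{i-1, j} \pmod{X^{p_i}},
\]
with the boundary conventions $\varphi_{i, l+1} = 0$ and $\psi_{0, j} = 0$.

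Set $a_i := \varphi_{i,i}(0)$ and $b_i := \psi_{i,i}(0)$. Using the strict inequalities, the $(i+1, i)$ and $(i, i-1)$ equations force $a_{i+1} = b_i$ and $a_i = b_{i-1}$ by canceling off-diagonal contributions whose $X$-valuations exceed that of $X^{p_\bullet - q_\bullet}$. At the corners, dividing the $(1,1)$ equation by $X^{p_1 - q_1}$, which is legal because $\varphi_{1, 2}$ is divisible by $X^{p_1 - p_2}$ with $p_1 - p_2 > p_1 - q_1$, yields $a_1 = b_1$; a symmetric treatment of the $(l, l)$ equation uses $q_{l-1} > p_l$ to give $a_l = b_l$. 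Hence $a_1 = \cdots = a_l = b_1 = \cdots = b_l =: \lambda$, and the assignment $\tau \colon \End_A(M_{\bp, \bq}) \to K$, $\tau(f) := \lambda$, is a surjective $K$-algebra homomorphism.

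To finish, I show the kernel $R := \ker \tau$ contains no nonzero idempotents. Let $e = (e_0, e_1) \in R$ be idempotent. The strict decrease $p_1 > \cdots > p_l$ forces the constant-term matrix $\bar{e}_0 := (\varphi_{i,k}(0))_{i,k}$ of $e_0$ to be lower triangular (since $p_i > p_k$ whenever $i < k$ enforces $\varphi_{i,k}(0) = 0$); moreover $\tau(e) = 0$ kills the diagonal, so $\bar{e}_0$ is strictly lower triangular and hence nilpotent. Combined with the idempotent identity $\bar{e}_0^2 = \bar{e}_0$ in the $l \times l$ matrix algebra over $K$, this forces $\bar{e}_0 = 0$. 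Consequently all entries of $e_0$ are divisible by $X$; writing $e_0 = X \tilde{e}_0$ and unpacking $e_0^2 = e_0$ shows the entries of $e_0$ are actually divisible by $X^2$, and iterating, by every $X^n$, so $e_0 = 0$. Likewise $e_1 = 0$, so $e = 0$. Any idempotent $e$ with $\tau(e) = 1$ gives $\Id - e \in R$ idempotent, hence $\Id - e = 0$ by the above. Thus $\End_A(M_{\bp, \bq})$ has only the trivial idempotents, so it is local, and $M_{\bp, \bq}$ is indecomposable. The main obstacle is the diagonal-constant analysis in the middle step, where one must carefully exploit the strictness of every inequality of the indecomposable pair to separate the constant coefficients from the higher-order divisibility data.
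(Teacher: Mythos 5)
Your proposal is correct and takes essentially the same route as the paper: you parametrize endomorphisms by matrices of polynomials, equate constant terms of the diagonal and subdiagonal entries of $f(0) h_{\bp,\bq} = h_{\bp,\bq} f(1)$ to show all diagonal scalars coincide, and conclude that $\End_A(M_{\bp,\bq})$ is local. The only cosmetic difference is in the last step, where the paper invokes directly that the residual endomorphism $f - \lambda\Id$ is a radical morphism in $\mod\Lambda$ (hence nilpotent), whereas you deduce locality by ruling out nontrivial idempotents in the kernel of the constant-term map; both are standard and amount to the same observation.
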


\begin{proof}
Up to duality we may assume $(\bp, \bq)$ is of mono type. An endomorphism $f = (f (0), f(1))$ of $M_{\bp, \bq}$ is of the form
\begin{align*}
f (0) & =
\begin{bmatrix}
\lambda_1 + X a_{1 1} & X^{p_1 - p_2} a_{1 2} & X^{p_1 - p_3} a_{1 3} & \cdots
\\
a_{2 1} & \lambda_2 + X a_{2 2} & X^{p_2 - p_3} a_{2 3}
\\
a_{3 1} & a_{3 2} & \lambda_3 + X a_{3, 3} & \ddots
\\
\vdots & &  \ddots & \ddots
\end{bmatrix}
\\
\intertext{and}
f (1) & =
\begin{bmatrix}
\mu_1 + X b_{1 1} & X^{q_1 - q_2} b_{1 2} & X^{q_1 - q_3} b_{1 3} & \cdots
\\
b_{2 1} & \mu_2 + X b_{2 2} & X^{q_2 - q_3} b_{2 3}
\\
b_{3 1} & b_{3 2} & \mu_3 + X b_{3 3} & \ddots
\\
\vdots & &  \ddots & \ddots
\end{bmatrix},
\end{align*}
for some scalars $\lambda_i, \mu_i, \in K$ and polynomials $a_{i j}, b_{i j} \in K [X]$. By looking at the degree $p_i - q_i$ coefficients of the diagonal entries of the matrices $f (0) h_{\bp, \bq}$ and $h_{\bp, \bq} f (1)$ we get the equalities
\[
\lambda_1 = \mu_1 , \; \lambda_2 = \mu_2, \; \ldots.
\]
Similarly, the constant terms of the below diagonal entries of the matrices $f (0) h_{\bp, \bq}$ and $h_{\bp, \bq} f (1)$ give equalities
\[
\lambda_2 = \mu_1, \; \lambda_3 = \mu_1, \; \ldots.
\]
Consequently, $f = \lambda_1 \Id_{M_{\bp, \bq}} + f'$, where $f'$ is a radical morphism ($f'$ is a radical morphism in $\mod \Lambda$, hence $(f')^m = 0$), thus $\End_A (M_{\bp, \bq})$ is a local ring and the claim follows.
\end{proof}

Together with Lemma~\ref{lemma iso}, Lemma~\ref{lemma ind} implies that if $(\bp, \bq)$ is a weakly indecomposable pair of partitions, then $M_{\bp, \bq}$ is indecomposable if and only if the pair $(\bp, \bq)$ is indecomposable. In fact, using Proposition~\ref{prop generic} one may show that a pair $(\bp, \bq)$ of partitions is indecomposable if and only if the generic $A$-module $M$ with $M_0 \cong U_\bp$ and $M_1 \cong U_\bq$ is indecomposable.

For an arbitrary pair $(\bp, \bq)$ of partitions, we define now what we call the \emph{canonical decomposition} of $(\bp, \bq)$. We recommend the reader to study Example~\ref{example can} before reading the formal definition. First we choose subsets $I_0 \subseteq \{ 1, \dots, \ell (\bp) \}$ and $J_0 \subseteq \{ 1, \ldots, \ell (\bq) \}$, and a bijection $u \colon J_0 \to I_0$ such that the sets $\{ p_i \mid i \in \{ 1, \ldots, \ell (\bp) \} \setminus I_0 \}$ and $\{ q_j \mid j \in \{ 1, \ldots, \ell (\bq) \} \setminus J_0 \}$ have no common elements, and $p_{u (j)} = q_j$ for each $j \in J_0$. Next we construct partial injective maps
$$
v_+, v_- \colon \{ 1, \ldots, \ell (\bq) \} \dashrightarrow \{ 1, \ldots, \ell (\bp) \}
$$
with the following properties:
\begin{enumerate}

\item
$v_+ (j)$ is defined if and only if $j \notin J_0$ and the set
\[
I_j^+ := \{ i \in \{ 1, \ldots, \ell (\bp) \} \setminus I_0 \mid p_i > q_j \} \setminus v_+ (\{ 1, \ldots, j - 1 \})
\]
is nonempty; moreover, if this is the case, then
$$
v_+ (j) := \min I_j^+,
$$
i.e. $v_+ (j)$ is the smallest index $i$ not in $I_0$ such that $p_i > q_j$, and which is not attached to an index smaller then $j$, if it exists; in particular, we construct $v_+$ by increasing induction on $j$.

\item
$v_- (j)$ is defined if and only if $j \notin J_0$ and the set
\[
I_j^- := \{ i \in \{ 1, \ldots, \ell (\bp)] \setminus I_0 \mid p_i < q_j \} \setminus v_- (\{ j + 1, \ldots, \ell (\bq) \})
\]
is nonempty; moreover, if this is the case, then
$$
v_- (j) := \max I_j^-,
$$
i.e. $v_- (j)$ is the biggest index $i$ not in $I_0$ such that $p_i < q_j$, and which is not attached to an index smaller then $j$, if it exists; in particular, we construct $v_-$ by decreasing induction on $j$.

\end{enumerate}
Let
$$
v_+^{-1}, v_-^{-1} \colon \{ 1, \ldots, \ell (\bp) \} \dashrightarrow \{ 1, \ldots, \ell (\bq) \}
$$ be the (partial) inverse maps. The following pairs of partitions constitute the canonical decomposition of $(\bp, \bq)$. First, for each $j \in J_0$, we have the pair $((p_{u (j)}), (q_j))$. Next, if $i \in \{ 1, \ldots, \ell (\bp) \} \setminus (I_0 \cup v_- (\{ 1, \ldots, \ell (\bq)\}))$, then the corresponding pair is given by the partitions
\[
(p_i, p_{v_- v_+^{-1} (i)}, p_{v_- v_+^{-1} v_- v_+^{-1} (i)}, \ldots) \qquad \text{and} \qquad (q_{v_+^{-1} (i)}, q_{v_+^{-1} v_- v_+^{-1} (i)}, \ldots)
\]
(note that the second partition may be empty). Dually, if $j \in \{ 1, \ldots, \ell (\bq) \} \setminus (I_0 \cup v_+^{-1} (\{ 1, \ldots, \ell (\bp)\}))$, then the partitions are
\[
(p_{v_- (j)}, p_{v_- v_+^{-1} v_- (j)}, \ldots) \qquad \text{and} \qquad (q_j, q_{v_+^{-1} v_- (j)}, q_{v_+^{-1} v_- v_+^{-1} v_- (j)}, \ldots).
\]
Note that (up to ordering) the obtained partitions do not depend on a choice of the sets $I_0$ and $J_0$. If the pairs $(\bp^{(1)}, \bq^{(1)})$, \ldots, $(\bp^{(k)}, \bq^{(k)})$ form the canonical decomposition of $(\bp, \bq)$, then we write
\[
(\bp, \bq) = (\bp^{(1)}, \bq^{(1)}) \oplus \cdots \oplus (\bp^{(k)}, \bq^{(k)}).
\]
If this is the case, then
\[
(\bp, \bq) = (\bp^{(1)}, \bq^{(1)}) \cup \cdots \cup (\bp^{(k)}, \bq^{(k)}),
\]
and the pairs $(\bp^{(1)}, \bq^{(1)})$, \ldots, $(\bp^{(k)}, \bq^{(k)})$ are indecomposable. On the other hand, if the pair $(\bp, \bq)$ is indecomposable, then its canonical decomposition consists of a single pair $(\bp, \bq)$.

If
\[
(\bp, \bq) = (\bp^{(1)}, \bq^{(1)}) \oplus \cdots \oplus (\bp^{(k)}, \bq^{(k)}),
\]
then (by abuse of notation) we put
\[
M_{\bp, \bq} := M_{\bp^{(1)}, \bq^{(1)}} \oplus \cdots \oplus M_{\bp^{(k)}, \bq^{(k)}}.
\]
Note that Lemma~\ref{lemma iso} guarantees that this definition coincides (up to isomorphism) with the one from Subsection~\ref{sub weakly}, if $(\bp, \bq)$ is weakly indecomposable. Indeed, let
\[
(\bp, \bq) = (\bp^{(0)}, \bq^{(0)}) \oplus (\bp^{(1)}, \bq^{(1)}) \cdots \oplus (\bp^{(k)}, \bq^{(k)})
\]
be the canonical decomposition of $(\bp, \bq)$. Since $(\bp, \bq)$ is weakly indecomposable, we may order the summands in such a way that, for each $i = 1, \ldots, k$, $\bp^{(i)} = (n_i) = \bq^{(i)}$ for some $0 < n_i \leq m$. Consequently, by Lemma~\ref{lemma iso} we get
\[
M_{\bp^{(0)}, \bq^{(0)}} \oplus M_{\bp^{(1)}, \bq^{(1)}} \oplus \cdots \oplus M_{\bp^{(k)}, \bq^{(k)}} \cong M_{\bp, \bq}
\]
(recall, $(\bp^{(0)}, \bq^{(0)}) \cup (\bp^{(1)}, \bq^{(1)}) \cup \cdots \cup (\bp^{(k)}, \bq^{(k)}) = (\bp, \bq)$).

\subsection{Examples}
We present now examples illustrating definitions from Subsections~\ref{sub weakly} and~\ref{subsect can}.

\subsubsection{}
Let $\bp := (6, 3, 2)$ and $\bq := (4, 2, 1)$. Note that the pair $(\bp,\bq)$ is weakly indecomposable. Then $M_{\bp,\bq} = (U_{\bp},U_{\bq},h_{\bp,\bq})$ looks as displayed in Figure~\ref{fig:Mpq}.
\begin{figure}[h]
$$
\xymatrix@-0.3cm{
0 \ar[d] &
\\
0 \ar[d] &
\\
0 \ar[d] & 1\ar[l]\ar[r]\ar[d] & 0 \ar[d]
\\
0 \ar[d] & 1\ar[l]\ar[r]\ar[d] & 0 \ar[d] & 1\ar[l]\ar[d]\ar[r] &
0 \ar[d]
\\
0 \ar[d] & 1\ar[l]\ar[r]\ar[d] & 0 & 1\ar[l]\ar[r] &
0 & 1 \ar[l]
\\
0  & 1\ar[l] &
}
$$
\caption{The module $M_{(6,3,2),(4,2,1)}$.} \label{fig:Mpq}
\end{figure}
Let us explain this in more detail. Each of the numbers $0$ and $1$ appearing in Figure~\ref{fig:Mpq} stands for a basis vector of $U_\bp$ and $U_\bq$, respectively. Thus $\dim(U_{\bp}) = 11$ and $\dim(U_{\bq}) = 7$. The arrows show how the generators $\varepsilon_0 := \left[
\begin{smallmatrix}
X & 0 \\ 0 & 0
\end{smallmatrix}
\right]$, $\varepsilon_1 := \left[
\begin{smallmatrix}
0 & 0 \\ 0 & X
\end{smallmatrix}
\right]$ and $\alpha := \left[
\begin{smallmatrix}
0 & 1 \\ 0 & 0
\end{smallmatrix}
\right]$ of $A$ act on the basis vectors. For example
\[
\xymatrix@-0.3cm{0 \ar[d]\\0}
\]
means that $\varepsilon_0$ applied to the basis vector labelled by the upper $0$ is equal to the basis vector labelled by the lower $0$, while
\[
\xymatrix@-0.3cm{0 & 1 \ar[l]\ar[r] & 0}
\]
means that $\alpha$ applied to the basis vector labelled by $1$ is equal to the sum of the two basis vectors labelled by the $0$'s.

\subsubsection{} \label{example can}
Let
\begin{align*}
\bp & = (19, 18, 17, 16, 13, 13, 10, 10, 9, 6, 6, 2, 2, 1),
\\
\bq & = (19, 15, 14, 13, 13, 13, 12, 8, 4, 4, 3, 2).
\end{align*}
We write the entries of $\bp$ and $\bq$ in two rows, indicating the ordering between them:
\[
\makebox[0pt][c]{\xymatrix@C=4pt{
19 & 18 & 17 & 16 & & & 13 & 13 & & & 10 & 10 & 9 & & 6 & 6 & & & & 2 & 2 & 1
\\
19 & & & & 15 & 14 & 13 & 13 & 13 & 12 & & & & 8 & & & 4 & 4 & 3 & 2
}}
\]
Then, we draw the maximal set of vertical (connecting identical entries, disjoint) edges, and we get
\[
\makebox[0pt][c]{\xymatrix@C=4pt{
19 & 18 & 17 & 16 & & & 13 & 13 & & & 10 & 10 & 9 & & 6 & 6 & & & & 2 & 2 & 1
\\
19 \ar@{-}[u] & & & & 15 & 14 & 13 \ar@{-}[u] & 13 \ar@{-}[u] & 13 & 12 & & & & 8 & & & 4 & 4 & 3 & 2 \ar@{-}[u]
}}
\]
Next, for each lower entry which is not connected to a vertical edge, we draw, starting from the left, an edge which connects its to the first upper entry on the left, which is not yet connected to another edge (if there is not such an entry, we do not draw an edge starting at the given entry). Thus, we first draw an edge connecting the lower 15 with the upper 16, then an edge connecting the lower 14 with the upper 17, and an edge connecting the free lower 13 with the upper 18:
\[
\makebox[0pt][c]{\xymatrix@C=4pt{
19 & 18 & 17 & 16 & & & 13 & 13 & & & 10 & 10 & 9 & & 6 & 6 & & & & 2 & 2 & 1
\\
19 \ar@{-}[u] & & & & 15 \ar@{-}[lu] & 14 \ar@{-}[lllu] & 13 \ar@{-}[u] & 13 \ar@{-}[u] & 13 \ar@{-}[lllllllu] & 12 & & & & 8 & & & 4 & 4 & 3 & 2 \ar@{-}[u]
}}
\]
We cannot draw an edge starting at the lower $12$ (since there is no free upper entry left to its left). Continuing in this way we get
\[
\makebox[0pt][c]{\xymatrix@C=4pt{
19 & 18 & 17 & 16 & & & 13 & 13 & & & 10 & 10 & 9 & & 6 & 6 & & & & 2 & 2 & 1
\\
19 \ar@{-}[u] & & & & 15 \ar@{-}[lu] & 14 \ar@{-}[lllu] & 13 \ar@{-}[u] & 13 \ar@{-}[u] & 13 \ar@{-}[lllllllu] & 12 & & & & 8 \ar@{-}[lu] & & & 4 \ar@{-}[lu] & 4 \ar@{-}[lllu] & 3 \ar@{-}[lllllllu] & 2 \ar@{-}[u]
}}
\]
Finally, for each lower entry which is not connected to a vertical edge, we draw, starting from the right, an edge which connects its to the first upper entry on the right, which is connected neither to a vertical edge nor to an SW-NE edge drawn earlier (again, if there is not such entry, we do not draw an edge):
\[
\makebox[0pt][c]{\xymatrix@C=4pt@R=3\baselineskip{
19 & 18 & 17 & 16 & & & 13 & 13 & & & 10 & 10 & 9 & & 6 & 6 & & & & 2 & 2 & 1
\\
19 \ar@{-}[u] & & & & 15 \ar@{-}[lu] \ar@{-}[rrrrrrrrrrru] & 14 \ar@{-}[lllu] \ar@{-}[rrrrrrru] & 13 \ar@{-}[u] & 13 \ar@{-}[u] & 13 \ar@{-}[lllllllu] \ar@{-}[rrru] & 12 \ar@{-}[ru] & & & & 8 \ar@{-}[lu] \ar@{-}[ru] & & & 4 \ar@{-}[lu] & 4 \ar@{-}[lllu] \ar@{-}[rrrru] & 3 \ar@{-}[lllllllu] \ar@{-}[rru] & 2 \ar@{-}[u]
}}
\]
The connected components of the obtained graph give the canonical decomposition of $(\bp, \bq)$, i.e.
\begin{multline*}
(\bp, \bq) = ((19), (19)) \oplus ((13), (13)) \oplus ((13), (13)) \oplus ((2), (2)) \oplus ((18, 10, 2), (13, 3))
\\
\oplus ((17, 9, 6, 1), (14, 8, 4)) \oplus ((16, 6), (15, 4)) \oplus ((10), (12)).
\end{multline*}


\section{Proof of Theorem~\ref{thm:intro4}}


\subsection{Stratification and general representations}
The main tool which we use in the proof of Theorem~\ref{thm:intro4} is the stratification by pairs of partitions, which we introduce now. If $d_0, d_1 \geq 0$ and $(\bp, \bq) \in \cP_m (d_0, d_1) := \cP_m (d_0) \times \cP_m (d_1)$, then we define
\[
\cS_{\bp, \bq} := \{ M \in \cX_m (d_0, d_1) \mid \text{$M_0 \cong U_\bp$ and $M_1 \cong U_\bq$} \}.
\]
Then $\cS_{\bp, \bq}$ is a locally closed irreducible subset of $\cX_m (d_0, d_1)$ (see~\cite{BobinskiSchroer2017a}*{Subsection~5.3}).
Consequently, if $\cY$ is an irreducible component of $\cX_m (d_0, d_1)$, then there exists a (uniquely determined) pair $(\bp, \bq)$ of partitions such that $\cY = \ov{\cS_{\bp, \bq}}$. For semicontinuity reasons (see also the last paragraph of Subsection~\ref{sub geometry}), if $\bp$ and $\bq$ are both maximal in $\cP_m(d_0)$ and $\cP_m(d_1)$, respectively, then $\ov{\cS_{\bp,\bq}}$ is an irreducible component of $\cX_m (d_0, d_1)$. Our ultimate goal is to prove that this is the only irreducible component of $\cX_m (d_0, d_1)$, or equivalently that $\ov{\cS_{\bp,\bq}} = \cX_m (d_0, d_1)$.

For an $A$-module $M$ we denote by $\cO_M$ the subset of $\cX_m (d_0, d_1)$ consisting of $N \in \cX_m (d_0, d_1)$ such that $N \cong M$ (as $A$-modules), where $d_0 := \dim_K M_0$ and $d_1 := \dim_K M_1$. One can easily check that $\cO_M$ is the $G_{d_0,d_1}$-orbit of $M$. The aim of this subsection is to prove the following result, which has been obtained earlier by Lutz Hille and Dieter Vossieck with different methods. Unfortunately, their proof has not been published yet, hence we include ours for completion.

\begin{Prop} \label{prop generic}
If $d_0, d_1 \geq 0$ and $(\bp, \bq) \in \cP_m (d_0, d_1)$, then
\[
\ov{\cS_{\bp, \bq}} = \ov{\cO_{M_{\bp, \bq}}}.
\]
\end{Prop}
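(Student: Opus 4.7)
My approach is to reduce the statement to a numerical equality about endomorphism rings, and then to prove that equality by exhibiting a surjection.

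Since $M_{\bp,\bq} \in \cS_{\bp,\bq}$ by construction, we have $\cO_{M_{\bp,\bq}} \subseteq \cS_{\bp,\bq}$, hence $\ov{\cO_{M_{\bp,\bq}}} \subseteq \ov{\cS_{\bp,\bq}}$. Both sets are irreducible (the orbit as a homogeneous space, the stratum by the fact cited above), so the claim is equivalent to the dimension equality $\dim \cO_{M_{\bp,\bq}} = \dim \cS_{\bp,\bq}$. For the right-hand side, I would use the forgetful projection
\[
\cS_{\bp,\bq} \to \cO_\bp \times \cO_\bq, \qquad (M_0, M_1, h_M) \mapsto (M_0, M_1).
\]
Its fibre over $(M_0, M_1)$ is the linear space $\Hom_\Lambda (M_1, M_0)$, of constant $K$-dimension equal to $\dim_K \Hom_\Lambda (U_\bq, U_\bp)$, so
\[
\dim \cS_{\bp,\bq} = \dim \cO_\bp + \dim \cO_\bq + \dim_K \Hom_\Lambda (U_\bq, U_\bp).
\]
Combined with $\dim \cO_\bp = d_0^2 - \dim_K \End_\Lambda (U_\bp)$, the analogous formula for $\bq$, and $\dim \cO_{M_{\bp,\bq}} = d_0^2 + d_1^2 - \dim_K \End_A (M_{\bp,\bq})$, the dimension equality reduces to
\[
\dim_K \End_A (M_{\bp,\bq}) = \dim_K \End_\Lambda (U_\bp) + \dim_K \End_\Lambda (U_\bq) - \dim_K \Hom_\Lambda (U_\bq, U_\bp).
\]

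By the description of morphisms in $\cM$, an endomorphism of $M_{\bp,\bq}$ is exactly a pair $(f_0, f_1) \in \End_\Lambda (U_\bp) \oplus \End_\Lambda (U_\bq)$ satisfying $f_0 \circ h_{\bp,\bq} = h_{\bp,\bq} \circ f_1$. Hence $\End_A (M_{\bp,\bq})$ is the kernel of the $K$-linear map
\[
\Phi \colon \End_\Lambda (U_\bp) \oplus \End_\Lambda (U_\bq) \to \Hom_\Lambda (U_\bq, U_\bp), \qquad (f_0, f_1) \mapsto f_0 \circ h_{\bp,\bq} - h_{\bp,\bq} \circ f_1,
\]
and the displayed dimension equality is precisely the statement that $\Phi$ is surjective. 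The whole proposition therefore reduces to proving surjectivity of $\Phi$.

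Proving this surjectivity is the main obstacle. My plan is to use the canonical decomposition $(\bp, \bq) = \bigoplus_k (\bp^{(k)}, \bq^{(k)})$ from Subsection~\ref{subsect can} to unwind both sides of $\Phi$ into block matrices indexed by the indecomposable summands. The problem then splits into diagonal and off-diagonal blocks. On each diagonal block one must show that $\Phi$ is surjective for the (weakly) indecomposable module $M_{\bp^{(k)}, \bq^{(k)}}$ on its own, and here the explicit staircase form of $h_{\bp^{(k)}, \bq^{(k)}}$ from Subsection~\ref{sub weakly} combined with the polynomial description of $\Lambda$-morphisms from Subsection~\ref{sub truncated} should allow a direct verification, using essentially the elementary row and column manipulations already illustrated in the proof of Lemma~\ref{lemma iso}. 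The more delicate part is the off-diagonal blocks: a homomorphism between distinct indecomposable summands $U_{\bq^{(j)}} \to U_{\bp^{(i)}}$ mixes components in a way not seen inside a single summand, and one has to exploit the strict interlacing inequalities on the parts of $\bp^{(i)}$ and $\bq^{(j)}$ built into the definition of the canonical decomposition in order to cancel such cross-terms against the subdiagonal ones appearing inside $h_{\bp^{(i)}, \bq^{(i)}}$ and $h_{\bp^{(j)}, \bq^{(j)}}$. I expect the bulk of the actual argument to consist of this combinatorial bookkeeping.
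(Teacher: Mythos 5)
Your reduction is correct and clean: $\cO_{M_{\bp,\bq}} \subseteq \cS_{\bp,\bq}$, both are irreducible, so equality of closures is equivalent to equality of dimensions; the fibration over $\cO_\bp \times \cO_\bq$ gives $\dim \cS_{\bp,\bq}$; and unwinding the orbit dimension, the claim becomes surjectivity of
\[
\Phi \colon \End_\Lambda (U_\bp) \oplus \End_\Lambda (U_\bq) \to \Hom_\Lambda (U_\bq, U_\bp), \qquad (f_0, f_1) \mapsto f_0 \circ h_{\bp,\bq} - h_{\bp,\bq} \circ f_1.
\]
This is a genuinely different framing from the paper's. The paper argues globally: starting from a generic $h_M$, a sequence of explicit row and column operations (Steps~I--III*) brings $h_M$ to the normal form $h_{\bp,\bq}$, so $\cO_{M_{\bp,\bq}}$ is dense in $\cS_{\bp,\bq}$ outright. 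Your version is the infinitesimal shadow of the same fact: surjectivity of $\Phi$ is precisely the statement that the $\bigl(\Aut_\Lambda (U_\bp) \times \Aut_\Lambda (U_\bq)\bigr)$-orbit of $h_{\bp,\bq}$ is open in $\Hom_\Lambda (U_\bq, U_\bp)$ (one should note, for positive characteristic, that orbit maps for module varieties are separable so the tangent-space criterion is valid). The infinitesimal route has the aesthetic advantage of replacing a normal-form argument with a single linear-algebra statement, and it makes the block structure transparent, since $h_{\bp,\bq}$ is block diagonal with respect to the canonical decomposition, so $\Phi$ decomposes into blocks $\Phi_{ij} \colon \Hom_\Lambda (U_{\bp^{(j)}}, U_{\bp^{(i)}}) \oplus \Hom_\Lambda (U_{\bq^{(j)}}, U_{\bq^{(i)}}) \to \Hom_\Lambda (U_{\bq^{(j)}}, U_{\bp^{(i)}})$.

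The gap is that you never prove surjectivity; the entire content of the proposition has been relocated into the last paragraph, which is a plan rather than an argument. The diagonal blocks already require a nontrivial induction along the staircase: one must show that the cross terms coming from the subdiagonal $1$'s can be absorbed, and this is exactly the same bookkeeping that drives Steps~II--III* in the paper, only written additively at the Lie-algebra level instead of multiplicatively. The off-diagonal blocks are arguably harder in your setup than in the paper's: in the normal-form proof, once the diagonal blocks are fixed in Step~I, row and column operations involving the $I_0$-rows and $J_0$-columns are explicitly excluded, and the strict interlacing inequalities between distinct indecomposable summands make the off-diagonal entries killable one at a time; in the surjectivity picture these become a genuine claim that each $\Phi_{ij}$ (not just $\Phi_{ii}$) is onto, and the target $\Hom_\Lambda (U_{\bq^{(j)}}, U_{\bp^{(i)}})$ now mixes parts of $\bp^{(i)}$ and $\bq^{(j)}$ with no a priori ordering relations except those forced by the construction of $v_+$ and $v_-$. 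You would need to extract from Subsection~\ref{subsect can} exactly which inequalities hold between $p_a^{(i)}$ and $q_b^{(j)}$ for $i \neq j$ and verify surjectivity case by case; this has not been done, and until it is, the proposal is a correct reduction but not a proof.
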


We use the notation from Subsection~\ref{subsect can}. Let $M$ be a generic module in $\cS_{\bp, \bq}$. Obviously we may assume $M_0 = U_\bp$ and $M_1 = U_\bq$. We show there exist automorphisms $f (0) \in \Aut_\Lambda (U_\bp)$ and $f (1) \in \Aut_\Lambda (U_\bq)$ such that, for $\Phi = (\phi_{i j}) := f (0) h_M f (1)^{-1}$, the following conditions are satisfied:
\begin{enumerate}

\item
if $j_0 \in J_0$ and $i_0 := u (j_0)$, then $\phi_{i_0 j_0} = 1$, $\phi_{i j_0} = 0$, for each $i \neq i_0$, and $\phi_{i_0 j} = 0$, for each $j \neq j_0$;

\item
if $j_0 \notin J_0$, $i \neq i_0^+ := v_+ (j_0)$, and $p_i > q_{j_0}$, then $\phi_{i j_0} = 0$; moreover, $\phi_{i_0^+ j_0} = X^{p_{i_0^+} - q_{j_0}}$ (if $i_0^+$ is defined);

\item
if $j_0 \notin J_0$, $i \neq i_0^- := v_- (j_0)$, and $p_i < q_{j_0}$, then $\phi_{i j_0} = 0$; moreover, $\phi_{i_0^- j_0} = 1$ (if $i_0^-$ is defined).

\end{enumerate}
In other words,
\[
M \cong M_{\bp^{(1)}, \bq^{(1)}} \oplus \cdots \oplus M_{\bp^{(k)}, \bq^{(k)}},
\]
where $(\bp, \bq) = (\bp^{(1)}, \bq^{(1)}) \oplus \cdots \oplus (\bp^{(k)}, \bq^{(k)})$ is the canonical decomposition of $(\bp, \bq)$.
This will imply our claim. We prove the above in a number of steps.

\begin{step} \label{step new1}
There exist automorphisms $f (0) \in \Aut_\Lambda (U_\bp)$ and $f (1) \in \Aut_\Lambda (U_\bq)$ such that, for each $j_0 \in J_0$, the following conditions are satisfied:
\begin{enumerate}
\renewcommand{\theenumi}{\alph{enumi}}

\item
$\phi_{i_0 j_0} = 1$, where $i_0 := u (j_0)$;

\item
$\phi_{i j_0} = 0$, for each $i \neq i_0$;

\item
$\phi_{i_0 j} = 0$, for each $j \neq j_0$;

\end{enumerate}
where $\Phi = (\phi_{i j}) := f (0) h_M f (1)^{-1}$.
\end{step}

\begin{proof}[Proof of Step~\ref{step new1}]
Let $\Psi = (\psi_{i j}) := h_M$. By genericity of $M$ we may assume that $\psi_{i_0 j_0}$ is an isomorphism. By multiplying the $i_0$-th row of $\Psi$ by the inverse of $\psi_{i_0 j_0}$, we may assume $\psi_{i_0 j_0} = 1$. Obviously, $\psi_{i j_0}$ ($\psi_{i_0 j}$) factors through $\psi_{i_0 j_0}$, for each $i \neq i_0$ ($j \neq j_0$, respectively). By performing appropriate row (column, respectively) operations on $\Psi$, we may assume these maps are zero, and the claim follows. We also remark that neither the $i_0$-th row nor the $j_0$-th column will be involved in elementary operations in the remaining steps of the proof.
\end{proof}

Step~\ref{step new1} implies that we are dealing with the matrix of the form
\[
\begin{bmatrix}
\Id_{\bigoplus_{j \in J_0} U_{(p_j)}} & 0
\\
0 & \Psi'
\end{bmatrix}.
\]
In the rest of the proof we concentrate on the matrix $\Psi'$. Thus in order to simplify the presentation we assume $I_0 = \emptyset = J_0$, i.e.\ $h_M = \Psi'$.

\begin{step} \label{step new2}
There exist automorphisms $f (0) \in \Aut_\Lambda (U_\bp)$ and $f (1) \in \Aut_\Lambda (U_\bq)$ such that, for each $j_0$, the following condition is satisfied:
\begin{enumerate}
\renewcommand{\theenumi}{\alph{enumi}}

\item
if $i_0^+ := v_+ (j_0)$ (in particular it means that $v_+ (j_0)$ is defined), then $\phi_{i_0^+ j_0}$ is a monomorphism and $\phi_{i_0^+ j} = 0$ for each $j > j_0$;

\end{enumerate}
where $\Phi = (\phi_{i j}) := f (0) h_M f (1)^{-1}$.
\end{step}

\begin{proof}[Proof of Step~\ref{step new2}]
Let $\Psi = (\psi_{i j})$ be the matrix obtained in Step~\ref{step new1}. By induction on $j_0$ we show we may assume $\psi_{i_0^+ j_0}$ is a monomorphism, where $i_0^+ := v_+ (j_0)$, and $\psi_{i_0^+ j} = 0$ for each $j > j_0$. Indeed, we may assume $\psi_{i_0^+ j_0}$ is a monomorphism by genericity. Moreover, take $j > j_0$. Then, $\psi_{i_0^+ j} = \psi_{i_0 j_0} \circ \varphi$ for some $\varphi \colon U_{(q_j)} \to U_{(q_{j_0})}$ by the factorization property described in Subsection~\ref{sub truncated}, since $q_j \leq q_{j_0}$. Consequently, if we subtract the $j_0$-th column composed on the right with $\varphi$ from the $j$-th column, we may assume $\psi_{i_0^+ j} = 0$. Note that if $j_1 < j_0$ and $i_1^+ := v_+ (j_1)$, then by induction assumption $\psi_{i_1^+ j_0} = 0$. Consequently, the entry $\psi_{i_1^+ j}$, which is $0$ by an earlier induction step, is not changed by this column operation.
\end{proof}

\begin{step} \label{step new3}
There exist automorphisms $f (0) \in \Aut_\Lambda (U_\bp)$ and $f (1) \in \Aut_\Lambda (U_\bq)$ such that, for each $j_0$, the following condition is satisfied:
\begin{enumerate}
\renewcommand{\theenumi}{\alph{enumi}}

\item
if $i \neq i_0^+ := v_+ (j_0)$ and $p_i > q_{j_0}$, then $\phi_{i j_0} = 0$; moreover, if $i_0^+$ is defined, then $\phi_{i_0^+ j_0}$ is a monomorphism;

\end{enumerate}
where $\Phi = (\phi_{i j}) := f (0) h_M f (1)^{-1}$.
\end{step}

\begin{proof}[Proof of Step~\ref{step new3}]
Let $\Psi = (\psi_{i j})$ be the matrix obtained in Step~\ref{step new2}. We proceed by decreasing induction on $j_0$. Take $i \neq i_0^+ := v_+ (j_0)$ with $p_i > q_{j_0}$. If $i = v_+ (j_1)$, for some $j_1 < j_0$, then $\psi_{i j_0} = 0$ by Step~\ref{step new2}, hence nothing has to be done in this case. Note that in particular this implies that the entries in the rows indexed by the elements of $v_+ (\{ 1, \ldots, j_0 - 1 \})$ are not changed.

Assume $i \not \in v_+ (\{ 1, \ldots, j_0 - 1 \})$. Then $i_0^+$ is defined, $\psi_{i_0^+ j_0}$ is a monomorphism by Step~\ref{step new2}, and $\psi_{i j_0} = \varphi \circ \psi_{i_0^+ j_0}$ for some $\varphi \colon U_{(p_{i_0^+})} \to U_{(p_i)}$ (again by the factorization property), since by the construction of $i_0^+$, $i > i_0^+$, thus $p_i \geq p_{i_0^+}$. We subtract the $i_0^+$-th row composed on the left with $\varphi$ from the $i$-th row, hence we may assume $\psi_{i j_0} = 0$. We need to observe the following. If $j_1 > j_0$ (in particular, $p_i > q_{j_0} \geq q_{j_1}$), then the entry $\psi_{i j_1}$ does not change, since $\psi_{i_0^+ j_1} = 0$ by induction assumption (note that $p_{i_0^+} > q_{j_0} \geq q_j$). 
\end{proof}

\renewcommand{\thestep}{\Roman{step}*}
\setcounter{step}{1}

\begin{step} \label{step new2prime}
There exist automorphisms $f (0) \in \Aut_\Lambda (U_\bp)$ and $f (1) \in \Aut_\Lambda (U_\bq)$ such that, for each $j_0$, the following condition are satisfied:
\begin{enumerate}
\renewcommand{\theenumi}{\alph{enumi}}

\item
if $i \neq i_0^+ := v_+ (j_0)$ and $p_i > q_{j_0}$, then $\phi_{i j_0} = 0$; moreover, if $i_0^+$ is defined, then $\phi_{i_0^+ j_0}$ is a monomorphism;

\item
if $i_0^- := v_- (j_0)$ (in particular it means that $v_- (j_0)$ is defined), then $\phi_{i_0^- j_0}$ is an epimorphism and $\phi_{i_0^- j} = 0$ for each $j < j_0$;

\end{enumerate}
where $\Phi = (\phi_{i j}) := f (0) h_M f (1)^{-1}$.
\end{step}

\begin{proof}[Proof of Step~\ref{step new2prime}]
Let $\Psi = (\psi_{i j})$ be the matrix obtained in Step~\ref{step new3}. The proof is dual to the proof of Step~\ref{step new2}. The crucial step consists of adding a multiplicity of the $j_0$-th column to the $j$-th column, where $j < j_0$. We additionally have to check, that if $p_i > q_j$, then $\psi_{i j}$ does not change. However, if $p_i > q_j$, then $p_i > q_{j_0}$, hence $\psi_{i j_0} = 0$ and the claim follows.
\end{proof}

\begin{step} \label{step new3prime}
There exist automorphisms $f (0) \in \Aut_\Lambda (U_\bp)$ and $f (1) \in \Aut_\Lambda (U_\bq)$ such that, for each $j_0$, the following conditions are satisfied:
\begin{enumerate}
\renewcommand{\theenumi}{\alph{enumi}}

\item
if $i \neq i_0^+ := v_+ (j_0)$ and $p_i > q_{j_0}$, then $\phi_{i j_0} = 0$; moreover, if $i_0^+$ is defined, then $\phi_{i_0^+ j_0}$ is a monomorphism;

\item
if $i \neq i_0^- := v_+ (j_0)$ and $p_i < q_{j_0}$, then $\phi_{i j_0} = 0$; moreover, if $i_0^-$ is defined, then $\phi_{i_0^- j_0}$ is an epimorphism;

\end{enumerate}
where $\Phi = (\phi_{i j}) := f (0) h_M f (1)^{-1}$.
\end{step}

\begin{proof}[Proof of Step~\ref{step new3prime}]
The proof is mostly dual to the proof of Step~\ref{step new3}, hence we leave it to the reader.
\end{proof}

\begin{proof}[The final step of the proof of Proposition~\ref{prop generic}]
Let $\Phi = (\phi_{i j}) $ be the matrix obtained in Step~\ref{step new3prime}. We have to show that, in addition to what we did so far, we may assume $\phi_{i_0^+ j_0} = X^{p_{i_0^+} - q_{j_0}}$ and $\phi_{i_0^- j_0} = 1$ every time it makes sense, where $i_0^+ := v_+ (j_0)$ and $i_0^- := v_- (j_0)$. We prove this by induction on  the depth of $j_0$, where we define the depth of $j_0$ to be the maximal $l$ such that the element $(v_-^{-1} v_+)^l (j_0)$ is defined. If we fix $j_0$, then $\phi_{i_0^+ j_0} = X^{p_{i_0^+} - q_{j_0}} \circ \varphi$, where $\varphi \in \Aut_\Lambda (U_{(q_{j_0})})$, hence if we multiply (on the right) the $j_0$-th column of $\Phi$ by $\varphi^{-1}$, we may assume $\phi_{i_0^+ j_0} = X^{p_{i_0^+} - q_{j_0}}$. Next, $\phi_{i_0^- j_0} = \psi \circ 1$, for some $\psi \in \Aut_\Lambda (U_{(p_{i_0^-})})$, and we multiply the $i_0^-$-th row of $\Phi$ by $\psi^{-1}$. Note that in this way we do not change columns, whose index has depth smaller or equal the depth of $j_0$, different from the $j_0$-th column. This finishes the proof.
\end{proof}

\subsection{Indecomposable irreducible components}
In what follows we use the following convention. If $\bq$ is a partition, then $q_0 := m$. Thus the condition $q_1 < q_0$ reads as $q_1 < m$. Similarly, if $i \geq \ell (\bq)$, then the condition $q_i > q_{i + 1}$ means $q_i > 0$.

\begin{Lem} \label{lem def ind}
Let $(\bp, \bq)$ be a weakly indecomposable pair of partitions. Assume $q_{j_1} < q_{j_1 - 1}$ and $q_{j_2} > q_{j_2 + 1}$, for some $j_1 < j_2$. Let $\bq'$ be defined by
\[
q_j' :=
\begin{cases}
q_j + 1 & \text{if $j = j_1$},
\\
q_j - 1 & \text{if $j = j_2$},
\\
q_j & \text{otherwise}.
\end{cases}
\]
If $(\bp, \bq')$ is weakly indecomposable, then $\cS_{\bp, \bq} \subseteq \ov{\cS_{\bp, \bq'}}$.
\end{Lem}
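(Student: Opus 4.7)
The plan is to use Proposition~\ref{prop generic} to reduce the claim and then to exhibit an explicit one-parameter family. Since $\ov{\cS_{\bp, \bq'}} = \ov{\cO_{M_{\bp, \bq'}}}$ is closed and $G_{d_0, d_1}$-invariant, and since $\cO_{M_{\bp, \bq}}$ is dense in $\cS_{\bp, \bq}$, the inclusion $\cS_{\bp, \bq} \subseteq \ov{\cS_{\bp, \bq'}}$ will follow once I show that the single point $M_{\bp, \bq}$ lies in $\ov{\cO_{M_{\bp, \bq'}}}$.

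To exhibit the degeneration I would construct a morphism $\gamma \colon K \to \cX_m(d_0, d_1)$, $\gamma(t) = (J_\bp, M_1(t), h(t))$, with $\gamma(t) \in \cO_{M_{\bp, \bq'}}$ for every $t \neq 0$ and $\gamma(0) \in \cO_{M_{\bp, \bq}}$. The second coordinate is an explicit Jordan degeneration of $J_{\bq'}$ to $J_\bq$ inside $\cY_m(d_1)$: moving a box from row $j_2$ to row $j_1 < j_2$ makes the partition strictly larger in the dominance order (the partial sums increase on $[j_1, j_2 - 1]$ and are unchanged elsewhere), so $\cO_\bq \subseteq \ov{\cO_{\bq'}}$ by the classical fact recalled in Subsection~\ref{sub geometry}. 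The new content is the third coordinate: I would take $h(t)$ equal to the canonical matrix $h_{\bp, \bq'}$ outside the $j_1$-th and $j_2$-th columns, and modify only the entries in those two columns, inserting a factor of $t$ on the appropriate $X^{p_i - q'_j}$-entries, in such a way that (i) $J_\bp h(t) = h(t) M_1(t)$ holds as an identity in $t$; (ii) a suitable one-parameter subgroup of $G_{d_0, d_1}$, a diagonal rescaling compatible with the block decompositions $U_\bp = \bigoplus U_{(p_i)}$ and $U_{\bq'} = \bigoplus U_{(q'_j)}$, transports $\gamma(t)$ back to the canonical triple $M_{\bp, \bq'}$ for every $t \neq 0$; and (iii) the surviving entries of $h(0)$ are, after the normalisation of Proposition~\ref{prop generic}, precisely those of $h_{\bp, \bq}$, with any trivial summands absorbed via Lemma~\ref{lemma iso}.

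The principal obstacle is bookkeeping. The pairs $(\bp, \bq)$ and $(\bp, \bq')$ need not be of the same mono/epi type, so the matrices $h_{\bp, \bq}$ and $h_{\bp, \bq'}$ look structurally different; moreover, adding a box to $q_{j_1}$ or removing one from $q_{j_2}$ may shift the position of $j_2$ inside the zigzag, and may reduce the length of $\bq'$ when $q_{j_2} = 1$. This forces a case analysis depending on the local configuration of $p_{j_1 - 1}, p_{j_1}, p_{j_2}, p_{j_2 + 1}$ relative to $q_{j_1}$ and $q_{j_2}$. In each configuration the family has to be written down slightly differently, and the identification $\gamma(0) \cong M_{\bp, \bq}$ again invokes Lemma~\ref{lemma iso} and the factorisation property of Subsection~\ref{sub truncated}. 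Conceptually the deformation is always the same simple one; the sub-cases, while routine, make up the bulk of the work.
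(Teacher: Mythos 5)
Your reduction via Proposition~\ref{prop generic} to showing $M_{\bp,\bq} \in \ov{\cO_{M_{\bp,\bq'}}}$ is exactly right, and the idea of realizing this by a deformation that moves one box of $\bq'$ from row $j_1$ down to row $j_2$ is the correct picture. But the proof stops where the mathematics actually begins: the family $\gamma(t)$ is never written down. Properties (i)--(iii) that you require of $h(t)$ are precisely the content of the lemma, and (ii) in particular is not a formality --- you must produce, for each $t \neq 0$, base changes of $K^{d_0}$ and $K^{d_1}$ that simultaneously normalize $M_1(t)$ to $J_{\bq'}$ and $h(t)$ to $h_{\bp,\bq'}$; a diagonal rescaling adapted to the block decompositions will not in general do both at once, because the Jordan degeneration of $J_{\bq'}$ to $J_{\bq}$ already mixes the summands $U_{(q'_{j_1})}$ and $U_{(q'_{j_2})}$. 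Until such a family is exhibited and verified in at least one nontrivial configuration, this is a plan rather than a proof.

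Two further remarks. First, your anticipated difficulty that $(\bp,\bq)$ and $(\bp,\bq')$ ``need not be of the same mono/epi type'' does not arise: if $(\bp,\bq)$ is of mono type then $p_{j_2} \geq q_{j_2} > q'_{j_2}$, so $(\bp,\bq')$ cannot be of epi type and, being weakly indecomposable by hypothesis, must be of mono type (and dually for epi type). This observation, which opens the paper's proof, removes most of the case analysis you fear; in particular $h_{\bp,\bq}$ and $h_{\bp,\bq'}$ have the same block shape and differ only by a factor of $X$ migrating between the entries involving columns $j_1$ and $j_2$. Second, the paper avoids one-parameter families altogether: it invokes Riedtmann's degeneration criterion and writes down an explicit short exact sequence $0 \to M_{\bp,\bq} \to M_{\bp,\bq'} \oplus N \to N \to 0$ with $N = (U, U, \Id_U)$, where $U$ is the direct sum of the blocks $U_{(p_i)}$ sitting between positions $j_1$ and $j_2$ in the zigzag. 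All maps are given in closed form with respect to the block decompositions, so the verification is a single matrix computation with no case split on the local configuration of the $p_i$. To complete your route you should either adopt that criterion or extract from its exact sequence the explicit curve you are postulating.
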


\begin{proof}
Note that $(\bp, \bq)$ and $(\bp, \bq')$ are weakly indecomposable of the same type. Indeed, it $(\bp, \bq)$ is of mono type, then $p_{j_2} \geq q_{j_2} > q_{j_2}'$, hence $(\bp, \bq')$ cannot be of epi type, thus it has to be of mono type. We argue similarly if $(\bp, \bq)$ is of epi type. Note that it also follows from the above that both $(\bp, \bq)$ and $(\bp, \bq')$ cannot be both simultaneously of mono and epi type.

Using Proposition~\ref{prop generic}, it is enough to show $\cO_{M_{\bp, \bq}} \subseteq \ov{\cO_{M_{\bp, \bq'}}}$. In order to prove this it is sufficient to construct an exact sequence
\[
0 \to M_{\bp, \bq} \xrightarrow{\left[
\begin{smallmatrix}
f_1 \\ f_2
\end{smallmatrix}
\right]} M_{\bp, \bq'} \oplus N \xrightarrow{\left[
\begin{smallmatrix}
g_1 & g_2
\end{smallmatrix}
\right]}
N \to 0
\]
(see~\cite{Riedtmann}*{Proposition~3.4}). Before we describe the module $N$ and the maps $f_1$, $f_2$, $g_1$ and $g_2$, we fix some notation.

Let $i_1 := j_1 + 1$, if the pairs are of mono type, and $i_1 := j_1$, if the pairs are of epi type. Similarly, $i_2 := j_2$, if the pairs are of mono type, and $i_2 := j_2 - 1$, if the pairs are of epi type. Let
\[
U' := \bigoplus_{i < i_1} U_{(p_i)}, \qquad U := \bigoplus_{i_1 \leq i \leq i_2} U_{(p_i)}, \qquad \text{and} \qquad U'' := \bigoplus_{i > i_2} U_{(p_i)}.
\]
Similarly,
\[
V' := \bigoplus_{j < j_1} U_{(q_j)}, \qquad V := \bigoplus_{j_1 < j < i_2} U_{(q_j)}, \qquad \text{and} \qquad V'' := \bigoplus_{j > j_2} U_{(q_j)}.
\]
Then
\begin{gather*}
U_\bp = U' \oplus U \oplus U'',
\\
U_\bq = V' \oplus U_{(q_{j_1})} \oplus V \oplus U_{(q_{j_2})} \oplus V''
\\
\intertext{and}
U_\bq' = V' \oplus U_{(q_{j_1} + 1)} \oplus V \oplus U_{(q_{j_2} - 1)} \oplus V''.
\end{gather*}
We keep these decompositions till the end of the proof. In particular, with respect to these decompositions
\[
h_{\bp, \bq} =
\begin{bmatrix}
\varphi_{1, 1} & \varphi_{1, 2} \circ X & 0 & 0 & 0
\\
0 & \varphi_{2, 2} & \varphi_{2, 3} & \varphi_{2, 4} & 0
\\
0 & 0 & 0 & \varphi_{3, 4} \circ 1 & \varphi_{3, 5}
\end{bmatrix}
\]
and
\[
h_{\bp, \bq'} =
\begin{bmatrix}
\varphi_{1, 1} & \varphi_{1, 2} & 0 & 0 & 0
\\
0 & \varphi_{2, 2} \circ 1 & \varphi_{2, 3} & \varphi_{2, 4} \circ X & 0
\\
0 & 0 & 0 & \varphi_{3, 4}  & \varphi_{3, 5}
\end{bmatrix}.
\]
It is important to observe that $\begin{bmatrix} \varphi_{2, 3} & \varphi_{2, 4} \end{bmatrix}$ is a monomorphism, while $\begin{bmatrix} \varphi_{2, 2} \circ 1 & \varphi_{2, 3} \end{bmatrix}$ is an epimorphism.

We put $N_0 := U =: N_1$ and $h_N := \Id_U$. Next
\[
f_1 (0) :=
\begin{bmatrix}
\Id_{U'} & 0 & 0
\\
0 & X \cdot \Id_U & 0
\\
0 & 0 & \Id_{U''}
\end{bmatrix}
\quad \text{and} \quad
f_1 (1) :=
\begin{bmatrix}
\Id_{V'} & 0 & 0 & 0 & 0
\\
0 & X & 0 & 0 & 0
\\
0 & 0 & X \cdot \Id_V & 0 & 0
\\
0 & 0 & 0 & 1 & 0
\\
0 & 0 & 0 & 0 & \Id_{V'}
\end{bmatrix}.
\]
Similarly,
\[
f_2 (0) :=
\begin{bmatrix}
0 & \Id_U & 0
\end{bmatrix}
\qquad \text{and} \qquad
f_2 (1) :=
\begin{bmatrix}
0 & \varphi_{2, 2} & \varphi_{2, 3} & \varphi_{2, 4} & 0
\end{bmatrix}.
\]
Further,
\[
g_1 (0) :=
\begin{bmatrix}
0 & \Id_U & 0
\end{bmatrix}
\qquad \text{and} \qquad
g_1 (1) :=
\begin{bmatrix}
0 & \varphi_{2, 2} \circ 1 & \varphi_{2, 3} & \varphi_{2, 4} \circ X & 0
\end{bmatrix}.
\]
Finally,
\[
g_2 (0) := - X \cdot \Id_U =: g_2 (1).
\]
We leave it to the reader to verify, that the sequence obtained in this way is actually an exact sequence in the category of $A$-modules.
\end{proof}

Using Lemma~\ref{lem def ind} and its dual, we get the following.

\begin{Cor} \label{cor ind irr}
Let $d_0, d_1 \geq 0$ and $(\bp, \bq) \in \cP_m (d_0, d_1)$ be an indecomposable pair of partitions, such that $\ov{\cS_{\bp, \bq}}$ is an irreducible component of $\cX_m (d_0, d_1)$. Then $(\bp, \bq)$ is one of the following pairs:
\begin{enumerate}

\item
$((p), (q))$;

\item
$((m, p), (q))$ with $m > q > p > 0$;

\item
$((p), (m, q))$ with $m > p >  q > 0$.
\end{enumerate}
\end{Cor}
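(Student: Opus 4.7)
The plan is to show that if $(\bp,\bq)$ is indecomposable but not of one of the three listed forms, then Lemma~\ref{lem def ind} or its dual (obtained by interchanging the roles of $\bp$ and $\bq$) produces a pair $(\bp^*,\bq^*)\neq(\bp,\bq)$ of partitions of the same total sizes with
\[
\cS_{\bp,\bq}\subseteq\ov{\cS_{\bp^*,\bq^*}}.
\]
Since distinct pairs of partitions give disjoint non-empty locally closed subsets of $\cX_m(d_0,d_1)$, this inclusion forces $\ov{\cS_{\bp,\bq}}\subsetneq\ov{\cS_{\bp^*,\bq^*}}$, contradicting the assumption that $\ov{\cS_{\bp,\bq}}$ is an irreducible component.

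The analysis splits by $\ell(\bp)$, $\ell(\bq)$, and the type of $(\bp,\bq)$. Suppose first that $\ell(\bp),\ell(\bq)\geq 2$. Then the indecomposable alternation forces either the mono condition $q_1<p_1\leq m$, giving $q_1<m$, or the epi condition $p_1<q_1\leq m$, giving $p_1<m$. In the mono case Lemma~\ref{lem def ind} applies with $j_1=1$ and $j_2=\ell(\bq)$: we have $q_1<m=q_0$ and $q_{\ell(\bq)}>0=q_{\ell(\bq)+1}$, and the modified partition $\bq'$ (differing from $\bq$ only by $+1$ at position $1$ and $-1$ at position $\ell(\bq)$) keeps $(\bp,\bq')$ weakly indecomposable of mono type, because each strict inequality $x>y$ in the alternating chain means $x\geq y+1$, which is enough slack to absorb the two unit shifts. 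The epi case is symmetric, applying the dual lemma to $\bp$.

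It remains to handle $\min(\ell(\bp),\ell(\bq))=1$. The case $\ell(\bp)=\ell(\bq)=1$ is form (1). If $\ell(\bp)=2$ and $\ell(\bq)=1$, indecomposability forces mono type with $m\geq p_1>q_1>p_2>0$ (epi type with $\ell(\bq)=1$ would require $\ell(\bp)\leq 1$). The dual of Lemma~\ref{lem def ind} applies with $i_1=1$, $i_2=2$, producing $\bp'=(p_1+1,p_2-1)$, whenever $p_1<m$, so we must have $p_1=m$ and arrive at form (2). Symmetrically, $\ell(\bp)=1$ and $\ell(\bq)=2$ yields form (3).

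The main, though modest, obstacle is verifying in each application of the lemma that the perturbed pair is weakly indecomposable of the same type as $(\bp,\bq)$. This reduces to a short list of inequalities that are immediate from the strict alternation in the definition of an indecomposable pair, so no genuine difficulty arises.
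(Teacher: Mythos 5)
Your proposal is correct and takes exactly the route the paper indicates (the paper's entire proof is the remark ``Using Lemma~\ref{lem def ind} and its dual, we get the following''): you apply Lemma~\ref{lem def ind} with $j_1=1$, $j_2=\ell(\bq)$ in the mono case (resp.\ the dual lemma with $i_1=1$, $i_2=\ell(\bp)$ in the epi case), using that the strict alternation leaves enough slack so that the shifted pair stays weakly indecomposable, and the same device forces $p_1=m$ (resp.\ $q_1=m$) when $\ell(\bp)=2,\ell(\bq)=1$ (resp.\ $\ell(\bp)=1,\ell(\bq)=2$). The only trivial omission is the case $\min(\ell(\bp),\ell(\bq))=0$, which indecomposability immediately forces into form~(1).
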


If $(\bp, \bq)$ is an indecomposable pair of partitions such that $\ov{\cS_{\bp, \bq}}$ is an irreducible component of $\cX_m (d_0, d_1)$, then we call $\ov{\cS_{\bp, \bq}}$ an indecomposable irreducible component. Recall from Lemma~\ref{lemma ind} that if this is the case, then $M_{\bp, \bq}$ is an indecomposable $A$-module.

\subsection{Deformations between strata}
In addition to Lemma~\ref{lem def ind} we have the following result about deformations of strata. The author thanks Grzegorz Zwara for a remark, which helped improve the formulation of the lemma and simplify its proof.

\begin{Lem} \label{lem def strata}
Let $d_0, d_1 \geq 0$, $\bp \in \cP_m (d_0)$, and $\bq \in \cP_M (d_1)$. Assume there exist $j_1 \leq j_2$ such that
\[
q_{j_1 - 1} > q_{j_1}, \qquad q_{j_2} > q_{j_2 + 1},
\]
and for each $i$, either $p_i > q_{j_1}$ or $p_i < q_{j_2}$. If the partition $\bq'$ is defined by
\[
q_j' :=
\begin{cases}
q_j + 1 & \text{if $j = j_1$},
\\
q_j - 1 & \text{if $j = j_2$},
\\
q_j & \text{otherwise},
\end{cases}
\]
then $\cS_{\bp, \bq} \subseteq \ov{\cS_{\bp, \bq'}}$.
\end{Lem}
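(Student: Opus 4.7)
The plan is to follow the same strategy as Lemma~\ref{lem def ind}: by Proposition~\ref{prop generic} it suffices to show $M_{\bp,\bq} \in \ov{\cO_{M_{\bp,\bq'}}}$, and by Riedtmann's criterion this reduces to exhibiting a short exact sequence
\[
0 \to M_{\bp,\bq} \to M_{\bp,\bq'} \oplus N \to N \to 0
\]
in $\mod A$ for a suitable auxiliary $A$-module $N$.

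The structural content of the hypothesis of Lemma~\ref{lem def strata} is that $\bp$ splits cleanly as $\bp = \bp^+ \cup \bp^-$, with $\bp^+$ collecting the parts $> q_{j_1}$ and $\bp^-$ the parts $< q_{j_2}$, so that $[q_{j_2}, q_{j_1}]$ is a gap for $\bp$ and $U_\bp = U_{\bp^+} \oplus U_{\bp^-}$. Decomposing $U_\bq$ and $U_{\bq'}$ analogously into left, middle, and right blocks according to whether the column index is $< j_1$, in $\{j_1, \ldots, j_2\}$, or $> j_2$, the key observation is that every $\Lambda$-linear map $U_{(q_j)} \to U_{(p_i)}$ with $j_1 \leq j \leq j_2$ and $p_i > q_{j_1}$ is divisible by $X^{p_i - q_j}$ (and so by $X$), by the polynomial description of $\Hom$-spaces recalled in Subsection~\ref{sub truncated}. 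This built-in factor of $X$ on the $U_{\bp^+}$-rows is the input for an $X$-transfer trick in the style of Lemma~\ref{lem def ind}.

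Following that template, I would construct $N$, $f_1$, $f_2$, $g_1$, $g_2$ by replacing the ``middle $\bp$-block'' appearing in Lemma~\ref{lem def ind} (which vanishes under the hypothesis of Lemma~\ref{lem def strata}) by a suitably chosen auxiliary module on the $M_1$-side that mediates the transfer of one $X$-step from position $j_2$ to position $j_1$. Concretely: one takes $N_0 = 0$ and $N_1$ a direct sum of cyclic $\Lambda$-modules that absorbs this $X$-step; $f_1(0) = \Id_{U_\bp}$, while $f_1(1)$ multiplies the $U_{(q_{j_1})}$ and middle $V$-blocks of $U_\bq$ by $X$ into the corresponding blocks of $U_{\bq'}$, sends $U_{(q_{j_2})}$ onto $U_{(q_{j_2}-1)}$ by the canonical epimorphism, and is the identity on the outer blocks; $f_2(1)$ sends the $U_{(q_{j_1})}$- and $U_{(q_{j_2})}$-blocks into $N_1$ so that $\left[\begin{smallmatrix} f_1 \\ f_2 \end{smallmatrix}\right]$ becomes injective; $g_2 = -X \cdot \Id_N$; and $g_1$ is forced by the relation $g_1 f_1 = X f_2$. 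The $A$-module compatibility conditions hold automatically on the $U_{\bp^+}$-rows of $h$ by the $X$-divisibility noted above, and on the $U_{\bp^-}$-rows are unrestricted. The main obstacle is then to pin down $N_1$ and the entries of $f_2$ and $g_1$ precisely so that all exactness conditions hold at once; this amounts to a block-wise verification of kernels and Jordan types on the $M_1$-side, and the gap condition on $\bp$ is exactly what rules out cross-interactions between $U_{\bp^+}$ and $U_{\bp^-}$ that would otherwise obstruct the required Jordan type $\bq$ for the image of $f$.
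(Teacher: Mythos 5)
The paper proves this lemma by a different, much cleaner route: it uses a vector-bundle observation. The hypothesis that every $p_i$ is either $> q_{j_1}$ or $< q_{j_2}$ is exactly what makes $\dim_K \Hom_\Lambda(U_\bq, U_\bp) = \dim_K \Hom_\Lambda(U_{\bq'}, U_\bp)$ (compute $\sum_{i,j}\min\{p_i,q_j\}$ before and after), and also $\bq \leq \bq'$ in the dominance order, so $\cO_\bq \subseteq \ov{\cO_{\bq'}}$. The incidence variety $\cY = \{M \in \cX_m(d_0,d_1) : (M_0,M_1) \in \cX\}$ over the locus $\cX \subseteq \ov{\cO_\bp} \times \ov{\cO_{\bq'}}$ where $\dim\Hom_\Lambda$ stays at its generic value is a vector bundle, hence irreducible; since $\cS_{\bp,\bq'}$ is the preimage of the dense open $\cO_\bp \times \cO_{\bq'}$ and $\cS_{\bp,\bq}$ is the preimage of $\cO_\bp \times \cO_\bq \subseteq \cX$, the inclusion $\cS_{\bp,\bq} \subseteq \ov{\cS_{\bp,\bq'}}$ is immediate. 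No Riedtmann sequence is constructed at all.

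Your proposal takes a genuinely different route — mimicking Lemma~\ref{lem def ind} by exhibiting an explicit short exact sequence $0 \to M_{\bp,\bq} \to M_{\bp,\bq'} \oplus N \to N \to 0$ with $N$ concentrated at vertex $1$ — and while the spirit is reasonable (in effect you are degenerating $U_\bq$ to $U_{\bq'}$ at the $M_1$-vertex while fixing $M_0 = U_\bp$, and appealing to the ``gap'' in $\bp$ to claim the $\Hom$-data doesn't obstruct this), the proposal has a real gap, which you yourself flag: you never actually specify $N_1$, $f_2$, or $g_1$ and never verify exactness, and this is the entire content of the lemma, not a routine check. More importantly, the template of Lemma~\ref{lem def ind} does not transfer as directly as you suggest: there $(\bp,\bq)$ and $(\bp,\bq')$ are \emph{weakly indecomposable}, so $h_{\bp,\bq}$ and $h_{\bp,\bq'}$ have the explicit bidiagonal shape of Subsection~\ref{sub weakly}. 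Here they need not be weakly indecomposable, so $M_{\bp,\bq}$ and $M_{\bp,\bq'}$ are defined via the \emph{canonical decompositions} of Subsection~\ref{subsect can}, which can look quite different for $(\bp,\bq)$ versus $(\bp,\bq')$ (some indecomposable summands may merge, split, or change length when $\bq$ is replaced by $\bq'$). Consequently your claim that the $A$-module compatibility $h_{\bp,\bq} = h_{\bp,\bq'} f_1(1)$ ``holds automatically on the $U_{\bp^+}$-rows by $X$-divisibility'' is not justified: $X$-divisibility of the individual $\Hom$-spaces does not by itself align the two specific block-diagonal matrices coming from two different canonical decompositions. To push your approach through you would have to track precisely how the canonical decomposition changes when $\bq \mapsto \bq'$ and build $N$, $f$, $g$ summand by summand; the paper's vector-bundle argument sidesteps all of this.
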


\begin{proof}
We use the following general result. Let $\pi \colon \cY \to \cX$ be a vector bundle with $\cX$ irreducible. If $\cU$ is a nonempty open subset of $\cX$, then $\pi^{-1} (\cU)$ is a dense subset of $\cY$.

In our case, let $\cX$ be the set of pairs $(M_0, M_1)$ such that $M_0 \in \ov{\cO_\bp}$, $M_1 \in \ov{\cO_{\bq'}}$ (see Subsection~\ref{sub geometry}), and
\[
\dim_K \Hom_\Lambda (M_1, M_0) = \dim_K \Hom_\Lambda (U_{\bq'}, U_{\bp}).
\]
Next, $\cY$ is the set of $M \in \cX_m (d_0, d_1)$ such that $(M_0, M_1) \in \cX$. The hom-condition implies that the natural projection $\pi \colon \cY \to \cX$ is a vector bundle. Moreover, $\cU := \cO_\bp \times \cO_{\bq'}$ is an open subset of $\cX$ and $\pi^{-1} (\cU) = \cS_{\bp, \bq'}$. Finally, observe that our assumptions imply $\cO_\bp \times \cO_\bq \subseteq \cX$, hence $\cS_{\bp, \bq} = \pi^{-1} (\cO_\bp \times \cO_\bq) \subseteq \ov{\cS_{\bp, \bq'}}$ by the result mentioned at the beginning of the proof.
\end{proof}

Obviously, there is the dual version of Lemma~\ref{lem def strata} (we change $\bp$ instead of $\bq$).

\subsection{Direct sums of indecomposable irreducible components}
We study now when the direct sum of two indecomposable irreducible components is an irreducible component.

\begin{Lem} \label{lem direct sums}
Let $(\bp, \bq)$ and $(\br, \bs)$ be pairs of partitions such that $\ov{\cS_{\bp, \bq}}$ and $\ov{\cS_{\br, \bs}}$ are indecomposable irreducible components. Then $\cS_{\bp, \bq} \oplus \cS_{\br, \bs}$ is not an irreducible component, unless one of the following conditions is satisfied:
\begin{enumerate}

\item \label{cond one}
$(\bp, \bq) = ((m), (m))$, or

\item \label{cond two}
$(\br, \bs) = ((m), (m))$, or

\item \label{cond three}
$(\bp, \bq) = ((m), (0))$ and $(\br, \bs)$ is of mono type, or

\item \label{cond four}
$(\bp, \bq) = ((0), (m))$ and $(\br, \bs)$ is of epi type, or

\item \label{cond five}
$(\br, \bs) = ((m), (0))$ and $(\bp, \bq)$ is of mono type, or

\item \label{cond six}
$(\br, \bs) = ((0), (m))$, and $(\bp, \bq)$ is of epi type.

\end{enumerate}
\end{Lem}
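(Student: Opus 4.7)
Set $M := M_{\bp, \bq} \oplus M_{\br, \bs}$ and consider the pair of partitions $(\mathbf{P}, \mathbf{Q}) := (\bp \cup \br, \bq \cup \bs)$. Then $M$ lies in the irreducible stratum $\cS_{\mathbf{P}, \mathbf{Q}}$. The plan is to show that, unless we are in one of the listed exceptional cases~(\ref{cond one})--(\ref{cond six}), the orbit closure $\ov{\cO_M}$ is strictly contained in some irreducible closed subset of $\cX_m (d_0, d_1)$, whence $\cS_{\bp, \bq} \oplus \cS_{\br, \bs}$ cannot be an irreducible component.

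The argument proceeds via two complementary tools. The first is a \emph{canonical decomposition argument}: by Proposition~\ref{prop generic}, the dense orbit of $\cS_{\mathbf{P}, \mathbf{Q}}$ is $\cO_{M_{\mathbf{P}, \mathbf{Q}}}$, where $M_{\mathbf{P}, \mathbf{Q}}$ is assembled from the canonical decomposition of $(\mathbf{P}, \mathbf{Q})$. Whenever this canonical decomposition does not match $(\bp, \bq) \oplus (\br, \bs)$ as a multiset of pairs, $M$ is not generic in $\cS_{\mathbf{P}, \mathbf{Q}}$, so $\ov{\cO_M} \subsetneq \ov{\cS_{\mathbf{P}, \mathbf{Q}}}$ and we are done. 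The second is a \emph{stratum deformation}: if the canonical decomposition does coincide with $(\bp, \bq) \oplus (\br, \bs)$, then $\ov{\cO_M} = \ov{\cS_{\mathbf{P}, \mathbf{Q}}}$, and we apply Lemma~\ref{lem def strata} or its dual to $(\mathbf{P}, \mathbf{Q})$. Concretely, we seek indices $j_1 < j_2$ with $Q_{j_1-1} > Q_{j_1}$, $Q_{j_2} > Q_{j_2+1}$, and no $P_i \in [Q_{j_2}, Q_{j_1}]$, or dually indices $i_1 < i_2$ on $\mathbf{P}$; if such indices exist, then $\cS_{\mathbf{P}, \mathbf{Q}}$ is properly contained in a larger stratum, again producing the desired strict inclusion.

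The bulk of the proof is a finite case analysis over the admissible shapes from Corollary~\ref{cor ind irr}: $((p),(q))$, $((m,p),(q))$ with $m > q > p > 0$, and $((p),(m,q))$ with $m > p > q > 0$. Using the obvious symmetry between the two summands and the duality swapping $\bp$ with $\bq$, only a handful of configurations remain. For most of them, either a shared value or an interleaving of consecutive parts of $\bp \cup \br$ with $\bq \cup \bs$ forces the canonical decomposition of $(\mathbf{P}, \mathbf{Q})$ to differ from the direct sum decomposition, so the first tool applies; the remaining configurations, where the canonical decomposition genuinely splits as the direct sum, are then resolved by exhibiting indices satisfying the hypotheses of Lemma~\ref{lem def strata} or its dual.

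The main obstacle is to identify exactly which configurations escape both tools and to check that these are precisely the exceptional cases~(\ref{cond one})--(\ref{cond six}). In each exception, one summand is \emph{extremal}: either $((m),(m))$, or one of $((m),(0))$ and $((0),(m))$ paired with a partner of matching mono or epi type. Such an extremal summand slots cleanly into the sets $I_0$ and $J_0$ of the canonical decomposition (so no rearrangement occurs) and simultaneously contributes a part equal to $m$ at the head of $\mathbf{P}$ or $\mathbf{Q}$, eliminating every candidate pair of indices for Lemma~\ref{lem def strata}. Verifying this dichotomy configuration by configuration, using the explicit graphical description of the canonical decomposition given in Subsection~\ref{subsect can}, is the technical heart of the argument.
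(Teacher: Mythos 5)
Your overall architecture matches the paper's: reduce to the finite list of shapes from Corollary~\ref{cor ind irr}, pass to $(\mathbf{P},\mathbf{Q}) := (\bp \cup \br, \bq \cup \bs)$, and split into (a) the canonical decomposition of $(\mathbf{P},\mathbf{Q})$ differs from $(\bp,\bq)\oplus(\br,\bs)$ (then $\ov{\cO_M} \subsetneq \ov{\cS_{\mathbf{P},\mathbf{Q}}}$ by Proposition~\ref{prop generic}), versus (b) they coincide and one must enlarge the stratum. However, in branch (b) you appeal exclusively to Lemma~\ref{lem def strata}, and this is not enough. The paper uses \emph{two} different deformation lemmas, Lemma~\ref{lem def ind} and Lemma~\ref{lem def strata}, and their hypotheses are genuinely incomparable: Lemma~\ref{lem def strata} demands that no $P_i$ lie in the closed interval $[Q_{j_2}, Q_{j_1}]$, while Lemma~\ref{lem def ind} only needs weak indecomposability to be preserved and is silent about such interval conditions. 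Already in the subcase $(\bp,\bq)=((p),(p))$, $(\br,\bs)=((r),(s))$ with $r>s$ and $p\geq r$ (the paper's case (1.1.1.2)), the canonical decomposition coincides with the direct sum, but Lemma~\ref{lem def strata} and its dual both fail because $Q_1 = p = P_1$; the paper resolves this with the dual of Lemma~\ref{lem def ind}.

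More seriously, there is a configuration that escapes \emph{all} of the machinery you name and is nonetheless not in your exception list, namely $(\bp,\bq)=((p),(p))$ and $(\br,\bs)=((r),(r))$ with $0<r\leq p<m$ (the paper's case (1.1.1.1)). Here $(\mathbf{P},\mathbf{Q})=((p,r),(p,r))$ has canonical decomposition $((p),(p))\oplus((r),(r))$, so tool one gives nothing, and neither Lemma~\ref{lem def strata} nor Lemma~\ref{lem def ind}, nor their duals, is applicable, since every admissible choice of indices is blocked by the equal parts $P_i=Q_i$, and changing $\mathbf{Q}$ to $(p+1,r-1)$ (or $\mathbf{P}$ similarly) destroys weak indecomposability. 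The paper handles this case by an entirely separate device: it constructs the short exact sequence of $\Lambda$-modules $0 \to U_{(p)} \to U_{(p+1)} \oplus U_{(r-1)} \to U_{(r)} \to 0$, lifts it to an exact sequence of $A$-modules giving $M_{\bp,\bq} \to M_{(p+1),(p+1)}\oplus M_{(r-1),(r-1)} \to M_{\br,\bs}$, and invokes Bongartz's degeneration criterion~\cite{Bongartz1996}*{Lemma~1.1}. Your proposed dichotomy (canonical-decomposition mismatch versus Lemma~\ref{lem def strata}) is therefore incomplete as stated: the exceptional cases (1)--(6) are \emph{not} characterized as exactly those configurations escaping these two tools, and without the short exact sequence argument the proof does not close.
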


\begin{proof}
There is a number of cases we have to consider. We will number them in order to make it easier to follow the proof. Note that according to Corollary~\ref{cor ind irr} $(\bp, \bq)$ is one of the pairs $((p), (q))$, $((m, p), (q))$, $((p), (m, q)$, and similarly for $(\br, \bs)$. In what follows we assume we are not in any of the six cases listed in the lemma.

(1)~$(\bp, \bq) = ((p), (q))$.

(1.1)~$(\br, \bs) = ((r), (s))$.

(1.1.1)~$p = q$. Note that in this case condition~\eqref{cond one} implies $0 < p < m$.

(1.1.1.1)~$r = s$. Note that in this case condition~\eqref{cond two} implies $0 < r < m$. By symmetry we may assume $p \geq r$. In this case, the sequence
\[
0 \to U_{(p)} \to U_{(p + 1)} \oplus U_{(r - 1)} \to U_{(r)} \to 0
\]
of $\Lambda$-modules induces the sequence
\[
0 \to M_{\bp, \bq} \to M_{(p + 1), (p + 1)} \oplus M_{(r - 1, r - 1)} \to M_{\br, \bs} \to 0
\]
of $A$-modules, which implies
\[
\cS_{\bp, \bq} \oplus \cS_{\br, \bs} \subseteq \ov{\cS_{(p + 1), (p + 1)} \oplus \cS_{(r - 1), (r - 1)}}
\]
according to Proposition~\ref{prop generic} (and~\cite{Bongartz1996}*{Lemma~1.1}).

(1.1.1.2)~$r \neq s$. By symmetry we may assume $r > s$. If $p \geq r$, then we have a sequence of inclusions
\[
\cS_{\bp, \bq} \oplus \cS_{\br, \bs} \subseteq \cS_{(p, r), (q, s)} \subseteq \ov{\cS_{(p + 1, r - 1), (q, s)}},
\]
where the latter inclusion follows from the dual of Lemma~\ref{lem def ind}. Assume $r > p$. If $s \geq q$, then
\[
\cS_{\bp, \bq} \oplus \cS_{\br, \bs} \subseteq \cS_{(r, p), (s, q)} \subseteq \ov{\cS_{(r, p), (s + 1, q - 1)}}
\]
by Lemma~\ref{lem def ind}. If $0 < s < q$, then
\[
\cS_{\bp, \bq} \oplus \cS_{\br, \bs} \subseteq \cS_{(r, p), (q, s)} \subseteq \ov{\cS_{(r, p), (q + 1, s - 1)}}
\]
by Lemma~\ref{lem def ind}. Finally, if $s = 0$, then condition~\eqref{cond five} implies $r < m$, hence
\[
\cS_{\bp, \bq} \oplus \cS_{\br, \bs} \subseteq \cS_{(r, p), (q, 0)} \subseteq \ov{\cS_{(r + 1, p - 1), (q, 0)}}
\]
by the dual of Lemma~\ref{lem def ind}.

(1.1.2)~$p \neq q$. By symmetry we may assume $p > q$.

(1.1.2.1)~$r = s$. Note that by condition~\eqref{cond two} $0 < r < m$. If $q \geq r$, then
\[
\cS_{\bp, \bq} \oplus \cS_{\br, \bs} \subseteq \cS_{(p, r), (q, s)} \subseteq \ov{\cS_{(p, r), (q + 1, s - 1)}}
\]
by Lemma~\ref{lem def ind}. Similarly, if $r \geq p$, then
\[
\cS_{\bp, \bq} \oplus \cS_{\br, \bs} \subseteq \cS_{(r, p), (s, q)} \subseteq \ov{\cS_{(r + 1, p - 1), (s, q)}}.
\]
Thus assume $p > r > q$. If $q > 0$, then
\[
\cS_{\bp, \bq} \oplus \cS_{\br, \bs} \subseteq \cS_{(p, r), (s, q)} \subseteq \ov{\cS_{(p, r), (s + 1, q - 1)}}.
\]
Otherwise, $p < m$ by condition~\eqref{cond three} and
\[
\cS_{\bp, \bq} \oplus \cS_{\br, \bs} \subseteq \cS_{(p, r), (s, 0)} \subseteq \ov{\cS_{(p + 1, r - 1), (s, 0)}}.
\]

(1.1.2.2)~$r > s$. By symmetry we may assume $p \geq r$. If $q > r$, then
\[
\cS_{\bp, \bq} \oplus \cS_{\br, \bs} \subseteq \cS_{(p, r), (q, s)} \subseteq \ov{\cO_{M_{(p, r), (q, s)}}}
\]
by Proposition~\ref{prop generic}, and $M_{(p, r), (q, s)} \notin \cS_{\bp, \bq} \oplus \cS_{\br, \bs}$ ($M_{(p, r), (q, s)}$ is indecomposable by Lemma~\ref{lemma ind}). If $p > r \geq q > s$, then
\[
\cS_{\bp, \bq} \oplus \cS_{\br, \bs} \subseteq \cS_{(p, r), (q, s)} \subseteq \ov{\cO_{M_{(p), (s)} \oplus M_{(r), (q)}}}
\]
by Proposition~\ref{prop generic}, and $M_{(p), (s)} \oplus M_{(r), (q)} \notin \cS_{\bp, \bq} \oplus \cS_{\br, \bs}$. Finally assume $p = r$. In this case by symmetry we may assume $q \geq s$. If $s > 0$, then
\[
\cS_{\bp, \bq} \oplus \cS_{\br, \bs} \subseteq \cS_{(p, r), (q, s)} \subseteq \ov{\cS_{(p, r), (q + 1, s - 1)}}
\]
by Lemma~\ref{lem def strata}, while if $s = 0$, then $p = r < m$ by condition~\eqref{cond five} and
\[
\cS_{\bp, \bq} \oplus \cS_{\br, \bs} \subseteq \cS_{(p, r), (q, 0)} \subseteq \ov{\cS_{(p + 1, r - 1), (q, 0)}}
\]
by the dual of Lemma~\ref{lem def strata}. 

(1.1.2.3)~$r < s$. By duality we may assume $p \geq s$. If $q \geq s$, then
\[
\cS_{\bp, \bq} \oplus \cS_{\br, \bs} \subseteq \cS_{(p, r), (q, s)} \subseteq \ov{\cS_{(p, r), (q + 1, s - 1)}}
\]
by Lemma~\ref{lem def strata}. If $s > q \geq r$, then
\[
\cS_{\bp, \bq} \oplus \cS_{\br, \bs} \subseteq \cS_{(p, r), (s, q)} \subseteq \ov{\cO_{M_{(p), (s)} \oplus M_{(r), (q)}}}
\]
by Proposition~\ref{prop generic}, and $M_{(p), (s)} \oplus M_{(r), (q)} \notin \cS_{\bp, \bq} \oplus \cS_{\br, \bs}$. Finally, if $r > q$, then
\[
\cS_{\bp, \bq} \oplus \cS_{\br, \bs} \subseteq \cS_{(p, r), (s, q)} \subseteq \ov{\cO_{M_{(p, r), (s, q)}}}
\]
by Proposition~\ref{prop generic}, and $M_{(p, r), (s, q)} \notin \cS_{\bp, \bq} \oplus \cS_{\br, \bs}$.

(1.2)~$(\br, \bs) = ((m, r), (s))$. Then $m > s > r > 0$ by Corollary~\ref{cor ind irr}.

(1.2.1)~$p = q$. Then $0 < p < m$ by condition~\eqref{cond one}. If $p > s$, then
\[
\cS_{\bp, \bq} \oplus \cS_{\br, \bs} \subseteq \cS_{(m, p, r), (q, s)} \subseteq \ov{\cS_{(m, p, r), (q + 1, s - 1)}}
\]
by Lemma~\ref{lem def ind}. If $s \geq p > r$, then
\[
\cS_{\bp, \bq} \oplus \cS_{\br, \bs} \subseteq \cS_{(m, p, r), (s, q)} \subseteq \ov{\cS_{(m, p, r), (s + 1, q - 1)}}.
\]
by Lemma~\ref{lem def ind} again. Finally, if $r \geq p$, then
\[
\cS_{\bp, \bq} \oplus \cS_{\br, \bs} \subseteq \cS_{(m, r, p), (s, q)} \subseteq \ov{\cS_{(m, r + 1, p - 1), (s, q)}}
\]
by the dual of Lemma~\ref{lem def ind}.

(1.2.2)~$p > q$. Then either $p < m$ or $q > 0$ by condition~\eqref{cond three}. If $q \geq s$, then
\[
\cS_{\bp, \bq} \oplus \cS_{\br, \bs} \subseteq \cS_{(m, p, r), (q, s)} \subseteq \ov{\cS_{(m, p, r), (q + 1, s - 1)}}
\]
by Lemma~\ref{lem def strata}. If $p \geq s > q \geq r$, then
\[
\cS_{\bp, \bq} \oplus \cS_{\br, \bs} \subseteq \cS_{(m, p, r), (s, q)} \subseteq \ov{\cO_{M_{(m, r), (q)} \oplus M_{(p), (s)}}}
\]
by Proposition~\ref{prop generic}, and $M_{(m, r), (q)} \oplus M_{(p), (s)} \notin \cS_{\bp, \bq} \oplus \cS_{\br, \bs}$. If $p \geq s > r > q$, then
\[
\cS_{\bp, \bq} \oplus \cS_{\br, \bs} \subseteq \cS_{(m, p, r), (s, q)} \subseteq \ov{\cO_{M_{(m), (0)} \oplus M_{(p, r), (s, q)}}}
\]
and $M_{(m), (0)} \oplus M_{(p, r), (s, q)} \notin \cS_{\bp, \bq} \oplus \cS_{\br, \bs}$. Next, if $s > p > q \geq r$, then
\[
\cS_{\bp, \bq} \oplus \cS_{\br, \bs} \subseteq \cS_{(m, p, r), (s, q)} \subseteq \ov{\cS_{(m, p + 1, r - 1), (s, q)}}
\]
by the dual of Lemma~\ref{lem def ind}. If $s > p > r > q$, then
\[
\cS_{\bp, \bq} \oplus \cS_{\br, \bs} \subseteq \cS_{(m, p, r), (s, q)} \subseteq \ov{\cO_{M_{(m, p), (s)} \oplus M_{(r), (q)}}}
\]
and $M_{(m, p), (s)} \oplus M_{(r), (q)} \notin \cS_{\bp, \bq} \oplus \cS_{\br, \bs}$. Finally assume that $r \geq p$.
Then
\[
\cS_{\bp, \bq} \oplus \cS_{\br, \bs} \subseteq \cS_{(m, r, p), (s, q)} \subseteq \ov{\cS_{(m, r + 1, p - 1), (s, q)}}
\]
by the dual of Lemma~\ref{lem def strata}.

(1.2.3)~$p < q$. In particular, $q > 0$. If $p \geq s$, then
\[
\cS_{\bp, \bq} \oplus \cS_{\br, \bs} \subseteq \cS_{(m, p, r), (q, s)} \subseteq \ov{\cS_{(m, p + 1, r - 1), (q, s)}}
\]
by the dual of Lemma~\ref{lem def ind}. If $q > s > p \geq r$, then
\[
\cS_{\bp, \bq} \oplus \cS_{\br, \bs} \subseteq \cS_{(m, p, r), (q, s)} \subseteq \ov{\cO_{M_{(p), (s)} \oplus M_{(m, r), (q)}}}
\]
by Proposition~\ref{prop generic}, and $M_{(p), (s)} \oplus M_{(m, r), (q)} \notin \cS_{\bp, \bq} \oplus \cS_{\br, \bs}$. If $q > s > r > p$, then
\[
\cS_{\bp, \bq} \oplus \cS_{\br, \bs} \subseteq \cS_{(m, r, p), (q, s)} \subseteq \ov{\cO_{M_{(r), (s)} \oplus M_{(m, p), (q)}}}
\]
and $M_{(r), (s)} \oplus M_{(m, p), (q)} \notin \cS_{\bp, \bq} \oplus \cS_{\br, \bs}$. If $s \geq q > p \geq r$, then
\[
\cS_{\bp, \bq} \oplus \cS_{\br, \bs} \subseteq \cS_{(m, p, r), (s, q)} \subseteq \ov{\cS_{(m, p, r), (s + 1, q - 1)}}
\]
by Lemma~\ref{lem def strata}. If $s \geq q \geq r > p$, then
\[
\cS_{\bp, \bq} \oplus \cS_{\br, \bs} \subseteq \cS_{(m, r, p), (s, q)} \subseteq \ov{\cO_{M_{(r), (q)} \oplus M_{(m, p), (s)}}}
\]
and $M_{(r), (q)} \oplus M_{(m, p), (s)} \notin \cS_{\bp, \bq} \oplus \cS_{\br, \bs}$. Finally, if $r > q$, then
\[
\cS_{\bp, \bq} \oplus \cS_{\br, \bs} \subseteq \cS_{(m, r, p), (s, q)} \subseteq \ov{\cS_{(m, r, p), (s + 1, q - 1)}}
\]
by Lemma~\ref{lem def ind}.

(1.3)~$(\br, \bs) = ((r), (m, s))$. This is dual to~(1.2).

(2)~$(\bp, \bq) = ((m, p), (q))$. Then $m > q > p > 0$ by Corollary~\ref{cor ind irr}.

(2.1)~$(\br, \bs) = ((r), (s))$. This is symmetric to~(1.2).

(2.2)~$(\br, \bs) = ((m, r), (s))$. Then $m > s > r > 0$ by Corollary~\ref{cor ind irr}. By symmetry we may assume $q \geq s$. If in addition $p \geq s$, then
\[
\cS_{\bp, \bq} \oplus \cS_{\br, \bs} \subseteq \cS_{(m, m, p, r), (q, s)} \subseteq \ov{\cO_{M_{(m), (0)} \oplus M_{(m, p, r), (q, s)}}}
\]
by Proposition~\ref{prop generic}, and $M_{(m), (0)} \oplus M_{(m, p, r), (q, s)} \notin \cS_{\bp, \bq} \oplus \cS_{\br, \bs}$. Thus we assume $s > p$, and put $p' := \max \{ p, r \}$ and $r' := \min \{ p, r \}$. Then
\[
\cS_{\bp, \bq} \oplus \cS_{\br, \bs} \subseteq \cS_{(m, m, p', r'), (q, s)} \subseteq \ov{\cS_{(m, m, p', r'), (q + 1, s - 1)}}
\]
by Lemma~\ref{lem def strata}.

(2.3)~$(\br, \bs) = ((r), (m, s))$. Then $m > r > s > 0$ by Corollary~\ref{cor ind irr}. In this case Proposition~\ref{prop generic} implies that
\[
\cS_{\bp, \bq} \oplus \cS_{\br, \bs} \subseteq \cS_{\bp \cup \br, \bq \cup \bs} \subseteq \ov{\cO_{M_{(m), (m)} \oplus M}},
\]
for some $A$-module $M$, and $M_{(m), (m)} \oplus M \notin \cS_{\bp, \bq} \oplus \cS_{\br, \bs}$.

(3)~$(\bp, \bq) = ((p), (m, q))$. This is dual to~(2).
\end{proof}

\subsection{Direct sum decompositions of irreducible components} \label{subsect KS}
Before we continue our proof we need to present the Krull--Remak--Schmidt theory for irreducible components as developed in \cite{CBSch} (see also~\cite{delaPena}). First, if $\cY$ is an irreducible component of $\cX_m (d_0, d_1)$, for some $d_0, d_1 \geq 0$, then there exist indecomposable irreducible components $\cY_1$, \ldots, $\cY_n$ such that $\cY = \ov{\cY_1 \oplus \cdots \oplus \cY_n}$. On the other hand, if $\cY_1$, \ldots, $\cY_n$ are indecomposable irreducible components, then $\ov{\cY_1 \oplus \cdots \oplus \cY_n}$ is an irreducible component if and only if $\ext (\cY_i, \cY_j) = 0$, for all $i \neq j$. Here, for two subsets $\cY'$ and $\cY''$ of $\cX_m (d_0', d_1')$ and $\cX_m (d_0'', d_1'')$, respectively, we put
\[
\ext (\cY', \cY'') := \min \{ \dim_K \Ext_A^1 (M', M'') \mid \text{$M' \in \cY'$ and $M'' \in \cY''$} \}.
\]
Taking the above into account, the following is a reformulation of Lemma~\ref{lem direct sums}.

\begin{Cor} \label{cor ext}
Let $(\bp, \bq)$ and $(\br, \bs)$ be pairs of partitions such that $\ov{\cS_{\bp, \bq}}$ and $\ov{\cS_{\br, \bs}}$ are indecomposable irreducible components. Then
\renewcommand{\theequation}{$*$}
\begin{equation} \label{eq non zero ext}
\ext (\cS_{\bp, \bq}, \cS_{\br, \bs}) \neq 0 \qquad \text{or} \qquad  \ext (\cS_{\br, \bs}, \cS_{\bp, \bq}) \neq 0
\end{equation}
unless one of the following conditions is satisfied:
\begin{enumerate}

\item \label{cond one prim}
$(\bp, \bq) = ((m), (m))$, or

\item
$(\br, \bs) = ((m), (m))$, or

\item
$(\bp, \bq) = ((m), (0))$ and $(\br, \bs)$ is of mono type, or

\item
$(\bp, \bq) = ((0), (m))$ and $(\br, \bs)$ is of epi type, or

\item
$(\br, \bs) = ((m), (0))$ and $(\bp, \bq)$ is of mono type, or

\item \label{cond six prim}
$(\br, \bs) = ((0), (m))$ and $(\bp, \bq)$ is of epi type.

\end{enumerate}
\end{Cor}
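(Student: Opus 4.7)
The plan is to deduce Corollary~\ref{cor ext} as an immediate reformulation of Lemma~\ref{lem direct sums} via the Krull--Remak--Schmidt theory for irreducible components recalled at the beginning of Subsection~\ref{subsect KS}. Recall from that theory (due to \cite{CBSch}) that if $\cY_1, \ldots, \cY_n$ are indecomposable irreducible components of module varieties, then $\ov{\cY_1 \oplus \cdots \oplus \cY_n}$ is itself an irreducible component if and only if $\ext (\cY_i, \cY_j) = 0$ for all $i \neq j$.

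Applied to the two-summand case, this equivalence says that $\ov{\cS_{\bp, \bq} \oplus \cS_{\br, \bs}}$ is an irreducible component of the ambient variety $\cX_m (d_0 + d_0', d_1 + d_1')$ precisely when both $\ext (\cS_{\bp, \bq}, \cS_{\br, \bs}) = 0$ and $\ext (\cS_{\br, \bs}, \cS_{\bp, \bq}) = 0$. The contrapositive is exactly what we need: if the closure $\ov{\cS_{\bp, \bq} \oplus \cS_{\br, \bs}}$ is not an irreducible component, then at least one of these two $\ext$ values is non-zero, i.e.~\eqref{eq non zero ext} holds.

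The execution is then a one-line combination. Assume that none of the six exceptional conditions \eqref{cond one prim}--\eqref{cond six prim} is satisfied; these are literally the same six conditions listed in Lemma~\ref{lem direct sums}. That lemma then guarantees that $\cS_{\bp, \bq} \oplus \cS_{\br, \bs}$ fails to be an irreducible component, and the Krull--Remak--Schmidt equivalence above forces \eqref{eq non zero ext}.

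No real obstacle arises here: the entire combinatorial work has already been carried out in the case analysis of Lemma~\ref{lem direct sums}, and the present corollary is purely a translation of its conclusion into the extension-theoretic language required for the subsequent arguments. The only thing to verify is that the list of exceptions in Lemma~\ref{lem direct sums} matches \eqref{cond one prim}--\eqref{cond six prim} verbatim, which is immediate by inspection.
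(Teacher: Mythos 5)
Your proof is correct and coincides exactly with the paper's: the paper introduces Corollary~\ref{cor ext} with the phrase ``the following is a reformulation of Lemma~\ref{lem direct sums},'' referring to precisely the Crawley--Boevey--Schr\"oer equivalence between ``$\ov{\cY_1 \oplus \cdots \oplus \cY_n}$ is an irreducible component'' and ``$\ext(\cY_i,\cY_j)=0$ for all $i \neq j$'' that you invoke. Your contrapositive reading of Lemma~\ref{lem direct sums} plus that equivalence is exactly the intended argument, and the list of exceptional conditions is indeed copied verbatim.
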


It is easy to observe that in all the cases listed above $(\bp, \bq) \cup (\br, \bs)$ consists of maximal partitions, thus $\cS_{(\bp, \bq) \cup (\br, \bs)}$ is an irreducible component of the corresponding variety. Moreover, $(\bp, \bq)$ and $(\br, \bs)$ form the canonical decomposition of $(\bp, \bq) \cup (\br, \bs)$. Consequently, conditions \eqref{cond one prim}--\eqref{cond six prim} are in fact equivalent to condition~\eqref{eq non zero ext}.

\subsection{Proof of Theorem~\ref{thm:intro4}}
Let $d_0, d_1 \geq 0$ and $\bp_0$ and $\bq_0$ be the maximal partitions in $\cP_m (d_0)$ and $\cP_m (d_1)$, respectively. Let $\cY$ be an irreducible component of $\cX_m (d_0, d_1)$ and $\cS_{\bp_1, \bq_1}$, \ldots, $\cS_{\bp_k, \bq_k}$ be indecomposable irreducible components such that
\[
\cY = \ov{\cS_{\bp_1, \bq_1} \oplus \cdots \oplus \cS_{\bp_k, \bq_k}}.
\]
Then $\ext (\cS_{\bp_i, \bq_i}, \cS_{\bp_j, \bq_j}) = 0$ for all $i \neq j$ by Subsection~\ref{subsect KS}.

Assume first there exists $i$ such that $(\bp_i, \bq_i)$ is of mono type, but $(\bp_i, \bq_i) \neq ((m), 0)$ and $(\bp_i, \bq_i)$ is not of epi type. Then Corollary~\ref{cor ext} implies that either $(\bp_j, \bq_j) = ((m), (m))$ or $(\bp_j, \bq_j) = ((m), (0))$, for each $j \neq i$. Consequently,
\[
(\bp_1, \bq_1) \cup \cdots \cup (\bp_k, \bq_k) = (\bp_0, \bq_0),
\]
thus
\[
\cY = \ov{\cS_{\bp_1, \bq_1} \oplus \cdots \oplus \cS_{\bp_k, \bq_k}} \subseteq \ov{\cS_{\bp_0, \bq_0}}.
\]
Since $\cY$ is an irreducible component and $\cY \subseteq \ov{\cS_{\bp_0, \bq_0}}$, $\cY = \ov{\cS_{\bp_0, \bq_0}}$. We proceed similarly, if there exists $i$ such that $(\bp_i, \bq_i)$ is of epi type, but $(\bp_i, \bq_i) \neq ((0), (m))$ and $(\bp_i, \bq_i)$ is not of mono type

Now assume that for each $i$ either $(\bp_i, \bq_i)$ is both of mono and epi type, or $(\bp_i, \bq_i) = ((m), (0))$, or $(\bp_i, \bq_i) = ((0), (m))$. If $(\bp_i, \bq_i)$ if both of mono and epi type, then Corollary~\ref{cor ind irr} implies that $\bp_i = (n) = \bq_i$ for some $1 \leq n \leq m$. Let $l$ be the number of $i$'s such that $(\bp_i, \bq_i)$ is both of mono and epi type, but $((\bp_i), (\bq_i)) \neq ((m), (m))$. Then Corollary~\ref{cor ext} implies $l = 1$, hence we again get
\[
(\bp_1, \bq_1) \cup \cdots \cup (\bp_k, \bq_k) = (\bp_0, \bq_0),
\]
i.e.\ $\cY = \ov{\cS_{\bp_0, \bq_0}}$, which finishes the proof. \qed

\subsection{Proof of Corollary~\ref{cor main}}
A geometric version of Morita equivalence due to Bongartz~\cite{Bongartz1991} implies that the algebra $A (m, 1)$ is geometrically irreducible if and only if $\cX_m (d_0, d_1)$ is irreducible for all $d_0, d_1 \geq 0$. Consequently, Corollary~\ref{cor main} is equivalent to Theorem~\ref{thm:intro4}. \qed

\bibsection

\begin{biblist}

\bib{Birkhoff}{article}{
   author={Birkhoff, G.},
   title={Subgroups of abelian groups},
   journal={Proc. Lond. Math. Soc. (II)},
   volume={38},
   date={1935},
   pages={385--401},
}

\bib{BobinskiSchroer2017a}{article}{
   author={Bobi\'nski, G.},
   author={Schr\"oer, J.},
   title={Algebras with irreducible module varieties I},
   eprint={arXiv:1709.05841},
}

\bib{BobinskiSchroer2017b}{article}{
   author={Bobi\'nski, G.},
   author={Schr\"oer, J.},
   title={Algebras with irreducible module varieties II: Two vertex case},
   eprint={arXiv:1801.03677},
}

\bib{Bongartz1991}{article}{
   author={Bongartz, K.},
   title={A geometric version of the Morita equivalence},
   journal={J. Algebra},
   volume={139},
   date={1991},
   pages={159--171},
}

\bib{Bongartz1996}{article}{
   author={Bongartz, K.},
   title={On degenerations and extensions of finite-dimensional modules},
   journal={Adv. Math.},
   volume={121},
   date={1996},
   pages={245--287},
}

\bib{CKW}{article}{
  author={Chindris, C.},
  author={Kinser, R.},
  author={Weyman, J.},
  title={Module varieties and representation type of finite-dimensional     algebras},
  journal={Int. Math. Res. Not. IMRN},
  volume={2015},
  date={2015},
  pages={631--650},
}

\bib{CBSch}{article}{
   author={Crawley-Boevey, W.},
   author={Schr\"oer, J.},
   title={Irreducible components of varieties of modules},
   journal={J. Reine Angew. Math.},
   volume={553},
   date={2002},
   pages={201--220},
}

\bib{delaPena}{article}{
   author={de la Pe\~na, J. A.},
   title={On the dimension of the module-varieties of tame and wild algebras},
   journal={Comm. Algebra},
   volume={19},
   date={1991},
   pages={1795--1807},
}

\bib{GeissLeclercSchroer}{article}{
   author={Geiss, Ch.},
   author={Leclerc, B.},
   author={Schr\"{o}er, J.},
   title={Quivers with relations for symmetrizable Cartan matrices I: Foundations},
   journal={Invent. Math.},
   volume={209},
   date={2017},
   pages={61--158},
}

\bib{Gerstenhaber}{article}{
   author={Gerstenhaber, M.},
   title={On dominance and varieties of commuting matrices},
   journal={Ann. of Math. (2)},
   volume={73},
   date={1961},
   pages={324--348},
}

\bib{KLM}{article}{
   author={Kussin, D.},
   author={Lenzing, H.},
   author={Meltzer, H.},
   title={Nilpotent operators and weighted projective lines},
   journal={J. Reine Angew. Math.},
   volume={685},
   date={2013},
   pages={33--71},
}

\bib{LuoZhang}{article}{
   author={Luo, X.-H.},
   author={Zhang, P.},
   title={Monic representations and Gorenstein-projective modules},
   journal={Pacific J. Math.},
   volume={264},
   date={2013},
   pages={163--194},
}

\bib{Richman}{collection.article}{
   author={Richman, F.},
   author={Walker, E. A.},
   title={Subgroups of $p^5$-bounded groups},
   booktitle={Abelian Groups and Modules},
   series={Trends Math.},
   publisher={Birkh\"{a}user, Basel},
   date={1999},
   pages={55--73},
}

\bib{Riedtmann}{article}{
   author={Riedtmann, C.},
   title={Degenerations for representations of quivers with relations},
   journal={Ann. Sci. \'Ecole Norm. Sup. (4)},
   volume={19},
   date={1986},
   pages={275--301},
}

\bib{Ringel}{article}{
  author={Ringel, C. M.},
  author={Schmidmeier, M.},
  title={Invariant subspaces of nilpotent linear operators. I},
  journal={J. Reine Angew. Math.},
  volume={614},
  date={2008},
  pages={1--52},
}

\end{biblist}

\end{document}